\newtheorem{theorem}{Theorem}[section]
\newtheorem{corollary}[theorem]{Corollary}
\newtheorem{definition}[theorem]{Definition}
\theoremstyle{definition}
\newtheorem{remark}[theorem]{Remark}
\def\sphline{\noalign{\vskip3pt}\hline\noalign{\vskip3pt}}
\def\XXint#1#2#3{{\setbox0=\hbox{$#1{#2#3}{\int}$}
     \vcenter{\hbox{$#2#3$}}\kern-.5\wd0}}
\def\ud{{\rm\,d}}
\def\fl{{\rm\,fl}}
\def\D{\mathbb{D}}
\def\N{\mathbb{N}}
\def\R{\mathbb{R}}
\def\Sph{\mathbb{S}}
\def\T{\mathbb{T}}
\def\DD{\mathcal{D}}
\def\II{\mathcal{I}}
\def\MM{\mathcal{M}}
\def\OO{\mathcal{O}}
\def\RR{\mathcal{R}}
\def\VV{\mathcal{V}}
\def\pr(#1){\left({#1}\right)}
\def\br[#1]{\left[{#1}\right]}
\def\abs#1{\left|{#1}\right|}
\def\norm#1{\left\|{#1}\right\|}
\def\conj#1{\overline{#1}}
\def\ii{{\rm i}}
\def\for{\hbox{ for }}
\newcommand{\diag}{\operatorname{diag}}
\begin{document}

\title{Conquering the pre-computation in two-dimensional harmonic polynomial transforms}

\author{%
{\sc Richard Mika\"el Slevinsky\thanks{Corresponding author. Email: Richard.Slevinsky@umanitoba.ca}}\\[2pt]
Department of Mathematics, University of Manitoba, Winnipeg, Canada}

\maketitle

\begin{abstract}
{We describe a skeletonization of the spherical harmonic connection problem that reduces the storage and pre-computation to superoptimal complexities at the cost of increasing the execution time by the modest multiplicative factor of $\mathcal{O}(\log n)$. One advantage of accelerating the spherical harmonic connection problem over accelerating synthesis and analysis is that neighbouring layers (in steps of two) may be expanded in eachother's bases. The proposed skeletonization maximizes this interconnectivity by overlaying a dyadic partitioning on the connection problem. We derive the symmetric-definite banded generalized eigenvalue problem required to accelerate spherical harmonic transforms. We also include a full analysis of the weighted normalized Jacobi connection problem with applications to fast harmonic polynomial transforms on the disk, triangle, rectangle, deltoid, wedge, and any other geometry with a bivariate analogue of Jacobi polynomials.}

{\em Keywords}: harmonic polynomials, connection problem, divide-and-conquer, operator symmetrization.
\end{abstract}

\section{Introduction}

The acceleration of spherical harmonic transforms has received considerable interest. Fast spherical harmonic transforms either accelerate synthesis and analysis, complementing the physical- and momentum-space representations of quantum mechanical operators, or they accelerate the connection problem, converting high-order associated Legendre functions to low-order associated Legendre functions to Fourier series. While many spherical harmonic transforms accelerate synthesis and analysis~\cite{Healy-Rockmore-Kostelec-Moore-9-341-03,Driscoll-Healy-15-202-94,Suda-Takami-71-703-02,Kunis-Potts-161-75-03,Mohlenkamp-5-159-99,Tygert-227-4260-08,Tygert-229-6181-10,Reuter-Ratner-Seideman-131-094108-1-09,Seljebotn-199-12-12,Wedi-Hamrud-Mozdzynski-141-3450-13}, only two methods exist to accelerate the connection problems for spherical harmonics~\cite{Rokhlin-Tygert-27-1903-06,Slevinsky-ACHA-17}. As we discuss, the spherical harmonic connection problem is in fact a problem of {\em interconnection}.

Pre-computations in fast spherical harmonic transforms range from a small multiple of execution times for practical bandwidths~\cite{Slevinsky-ACHA-17}, to the asymptotically optimal complexity of $\OO(n^2\log n)$ in~\cite{Rokhlin-Tygert-27-1903-06,Tygert-227-4260-08}, though the optimal complexity is predicted ``only for absurdly large degrees.'' Practical applications for numerical weather prediction on supercomputers currently use spherical harmonic expansions with $n^2 = \OO(64\times10^6)$ degrees of freedom~\cite{Wedi-Hamrud-Mozdzynski-141-3450-13}; the design and analysis of new fast transforms must take these present and future realities into account.

In this work, we describe a skeletonization of the spherical harmonic connection problem that reduces the pre-computation and storage to the superoptimal complexity of $\OO(n^{\frac{3}{2}}\log n)$ at the cost of increasing the run-time by the modest multiplicative factor of $\OO(\log n)$. The skeletonization takes advantage of the interconnectivity of the spherical harmonic connection problem: it partitions the work asymptotically equally between a data-sparse solution to the neighbouring connection problem and the numerically stable representation of the matrices of connection coefficients as collections of eigenfunctions of a symmetric-definite banded generalized eigenproblem. Through divide-and-conquer eigensolvers accelerated by the Fast Multipole Method~\cite{Greengard-Rokhlin-73-325-87}, optimal pre-computations are realized asymptotically and for practical bandwidths as well. For numerical computing on anything smaller than a supercomputer, the superoptimal complexity of the storage requirement is as important as the pre-computation complexity, for a modern implementation depends even more so on the movement of data than conventional counts of floating-point operations.

Beyond the sphere, harmonic polynomials play an important r\^ole in $L^2$ approximation theory and spectral methods in more exotic two-dimensional geometries. Koornwinder~\cite{Koornwinder-435-75} and Dunkl and Xu~\cite{Dunkl-Xu-14} have released compendia on the bivariate analogues of Jacobi polynomials on the disk, triangle, rectangle, deltoid, among other shapes. Olver and Xu's new work on the orthogonal structure of the wedge and the boundary of the square~\cite{Olver-Xu-17}, with potential applications to singular integral equations~\cite{Slevinsky-Olver-332-290-17}, serves to highlight the genericity of the Jacobi approach when the Laplace--Beltrami operator is separable. We derive the analogous representations of the connection problem to accelerate two-dimensional harmonic polynomial transforms with the same superoptimal complexities for the pre-computation and storage.

\section{Fundamentals}

Let $\mu$ be a positive Borel measure on $D\subset\R^n$. The inner product:
\begin{equation}
\langle f, g \rangle = \int_D \conj{f(x)} g(x)\ud\mu(x),
\end{equation}
where $\conj{f(x)}$ denotes complex conjugation, induces the norm $\norm{f}_2 = \sqrt{\langle f,f\rangle}$ and the associated Hilbert space $L^2(D,\ud\mu(x))$. In case of ambiguity, the notation $\langle f, g\rangle_{{\rm d}\mu}$ is used to distinguish between different measures.

Let $\Sph^2\subset\R^3$ denote the unit $2$-sphere, $\theta\in[0,\pi]$ the co-latitudinal angle, $\varphi\in[0,2\pi)$ the longitudinal angle, and $\ud\Omega = \sin\theta\ud\theta\ud\varphi$ the measure generated by the solid angle $\Omega$ subtended by a spherical cap. Then, any function $f\in L^2(\Sph^2,\ud\Omega)$ may be expanded in spherical harmonics:
\begin{equation}\label{eq:sphericalharmonicexpansion}
f(\theta,\varphi) = \sum_{\ell=0}^{+\infty}\sum_{m=-\ell}^{+\ell} f_\ell^m Y_\ell^m(\theta,\varphi) = \sum_{m=-\infty}^{+\infty}\sum_{\ell=\abs{m}}^{+\infty} f_\ell^m Y_\ell^m(\theta,\varphi),
\end{equation}
where the expansion coefficients are:
\begin{equation}
f_\ell^m = \dfrac{\langle Y_\ell^m, f\rangle}{\langle Y_\ell^m, Y_\ell^m\rangle}.
\end{equation}
Let $\N_0$ denote the non-negative integers. Bandlimiting Eq.~\eqref{eq:sphericalharmonicexpansion} to $\ell\le n\in\N_0$ results in the best degree-$n$ trigonometric polynomial approximation of $f\in L^2(\Sph^2,\ud\Omega)$.

Using the Condon--Shortley phase convention~\cite{Condon-Shortley-51}, orthonormal spherical harmonics are given by:
\begin{equation}\label{eq:sphericalharmonics}
Y_\ell^m(\theta,\varphi) = \dfrac{e^{\ii m\varphi}}{\sqrt{2\pi}} \underbrace{\ii^{m+|m|}\sqrt{(\ell+\tfrac{1}{2})\dfrac{(\ell-m)!}{(\ell+m)!}} P_\ell^m(\cos\theta)}_{\tilde{P}_\ell^m(\cos\theta)},\qquad \ell\in\N_0,\quad -\ell\le m\le \ell,
\end{equation}
where the notation $\tilde{P}_\ell^m$ is used to denote orthonormality for fixed $m$ in the sense of $L^2([-1,1],\ud x)$.

\subsection{The spherical harmonic connection problem}

This subsection is an abbreviated version of the full description by Slevinsky in~\cite{Slevinsky-ACHA-17}.

\begin{definition}
Let $\{\phi_n(x)\}_{n\in\N_0}$ be a family of orthogonal functions with respect to $L^2(\hat{D},\ud\hat{\mu}(x))$ and let $\{\psi_n(x)\}_{n\in\N_0}$ be another family of orthogonal functions with respect to $L^2(D,\ud\mu(x))$. The connection coefficients:
\begin{equation}
c_{\ell,n} = \dfrac{\langle \psi_\ell, \phi_n\rangle_{{\rm d}\mu}}{\langle \psi_\ell, \psi_\ell\rangle_{{\rm d}\mu}},
\end{equation}
allow for the expansion:
\begin{equation}
\phi_n(x) = \sum_{\ell=0}^\infty c_{\ell,n} \psi_\ell(x).
\end{equation}
\end{definition}

\begin{definition}
Let $G_n$ denote the real Givens rotation:
\[
G_n = \begin{pmatrix}
1 & \cdots & 0 & 0 & 0 & \cdots & 0\\
\vdots & \ddots & \vdots & \vdots & \vdots & & \vdots\\
0 & \cdots & c_n & 0 & s_n & \cdots & 0\\
0& \cdots & 0 & 1 & 0 & \cdots & 0\\
0 & \cdots & -s_n & 0 & c_n & \cdots & 0\\
\vdots & & \vdots & \vdots & \vdots & \ddots & \vdots\\
0 & \cdots & 0 & 0 & 0 & \cdots & 1\\
\end{pmatrix},
\]
where the sines $s_n = \sin\theta_n$ and the cosines $c_n = \cos\theta_n$, for some $\theta_n\in[0,2\pi)$, are in the intersections of the $n^{\rm th}$ and $n+2^{\rm nd}$ rows and columns, embedded in the identity of a conformable size. 
\end{definition}

Let $I_{m\times n}$ denote the rectangular identity matrix with ones on the main diagonal and zeros everywhere else.

\begin{theorem}[Slevinsky~\cite{Slevinsky-ACHA-17}]\label{theorem:SS}
The connection coefficients between $\tilde{P}_{n+m+2}^{m+2}(\cos\theta)$ and $\tilde{P}_{\ell+m}^m(\cos\theta)$ are:
\begin{equation}\label{eq:SScoefficients}
c_{\ell,n}^{m} = \left\{\begin{array}{ccc} (2\ell+2m+1)(2m+2)\sqrt{\dfrac{(\ell+2m)!}{(\ell+m+\frac{1}{2})\ell!}\dfrac{(n+m+\frac{5}{2})n!}{(n+2m+4)!}}, & \for & \ell \le n,\quad \ell+n\hbox{ even},\\
-\sqrt{\dfrac{(n+1)(n+2)}{(n+2m+3)(n+2m+4)}}, & \for & \ell = n+2,\\
0, & & otherwise.
\end{array} \right.
\end{equation}
Furthermore, the matrix of connection coefficients $C^{(m)} \in \R^{(n+3)\times (n+1)}$ may be represented via the product of $n$ Givens rotations:
\[
C^{(m)} = G_0^{(m)}G_1^{(m)}\cdots G_{n-2}^{(m)}G_{n-1}^{(m)} I_{(n+3)\times (n+1)},
\]
where the sines and cosines for the Givens rotations are given by:
\begin{equation}\label{eq:GRcoefficients}
s_n^m = \sqrt{\dfrac{(n+1)(n+2)}{(n+2m+3)(n+2m+4)}},\quad{\rm and}\quad c_n^m = \sqrt{\dfrac{(2m+2)(2n+2m+5)}{(n+2m+3)(n+2m+4)}}.
\end{equation}
\end{theorem}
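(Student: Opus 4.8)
The plan is to establish the two assertions in sequence: first the closed form \eqref{eq:SScoefficients} for the connection coefficients, then the Givens-rotation factorization of $C^{(m)}$. For the first part, I would start from the classical three-term recurrence (or equivalently the ladder relations) relating $\tilde P_\ell^m$ and $\tilde P_{\ell}^{m+2}$. The cleanest route is to recognize the associated Legendre functions for fixed order as rescaled Gegenbauer/Jacobi polynomials, so that the connection between order $m$ and order $m+2$ is a special (``one-step in parameter'') instance of the Jacobi connection problem; the coefficients of that connection are hypergeometric and evaluate in closed form. Concretely, I would (i) write $\tilde P_{n+m+2}^{m+2}$ in terms of a single normalized Jacobi polynomial $P^{(m+\alpha,m+\beta)}_{?}$ with the appropriate weight, (ii) use the known one-parameter connection formula (the $\,_2F_1$ evaluates by Vandermonde/Chu since the step is by an integer), and (iii) reintroduce the Condon--Shortley normalization factors $\ii^{m+|m|}\sqrt{(\ell+\tfrac12)(\ell-m)!/(\ell+m)!}$ on both sides. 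Matching powers then yields the stated $c^m_{\ell,n}$, with the parity constraint $\ell+n$ even falling out because the weight has definite parity in $x=\cos\theta$, so only alternate Jacobi polynomials appear; the isolated $\ell=n+2$ term is the leading coefficient ratio and should drop out as the $-s_n^m$ entry.

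For the factorization, the strategy is induction on $n$. The key structural observation is that $C^{(m)}$ is \emph{banded below} in a staircase pattern: nonzeros occur only for $\ell\le n+2$ with $\ell+n$ even, so the matrix has the sparsity of a product of Givens rotations acting on pairs of indices two apart. I would peel off $G_{n-1}^{(m)}$ (or $G_0^{(m)}$) from one end: multiplying $I_{(n+3)\times(n+1)}$ by the last rotation creates exactly two nonzero entries in the relevant column, namely $c^m_{n-1}$ and $-s^m_{n-1}$ in rows $n-1$ and $n+1$; composing successively, the product $G_0^{(m)}\cdots G_{n-1}^{(m)}$ builds up the staircase. One then verifies that the resulting $(\ell,n)$ entry equals the product $c^m_\ell\, s^m_{\ell+2}\, s^m_{\ell+4}\cdots$ (a telescoping chain of sines capped by a cosine), and checks this telescoping product equals the closed form in \eqref{eq:SScoefficients}. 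That reduces the whole identity to two elementary algebraic checks: (a) $\prod s_n^m$ telescopes against the factorials in \eqref{eq:SScoefficients} because consecutive terms share the factor $(n+2m+3)(n+2m+4)$ in numerator/denominator, and (b) $(s_n^m)^2+(c_n^m)^2=1$, which from \eqref{eq:GRcoefficients} is the identity $(n+1)(n+2)+(2m+2)(2n+2m+5)=(n+2m+3)(n+2m+4)$, a polynomial identity in $n$ and $m$ verified by expansion.

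The main obstacle, I expect, is bookkeeping rather than conceptual: getting every normalization constant, index shift, and sign exactly right across the two different normalizations ($\tilde P^m_\ell$ versus $\tilde P^{m+2}_{\ell+2}$), since the Condon--Shortley phase and the $(\ell+\tfrac12)$ factor both depend on the order and degree in ways that partially cancel. A secondary subtlety is confirming that the Givens rotations as defined — acting on rows/columns $n$ and $n+2$ — compose in the stated left-to-right order to produce precisely the staircase, and that the rectangular truncation $I_{(n+3)\times(n+1)}$ interacts correctly with the last two rotations (which would otherwise mix in columns that get truncated away). I would handle this by checking the $2\times 2$ and $4\times 4$ base cases explicitly, confirming both the coefficient formula and the factorization there, and then letting the induction carry the pattern; the orthogonality $(s_n^m)^2+(c_n^m)^2=1$ guarantees each $G_n^{(m)}$ is a genuine rotation, so no normalization is lost in the product and $C^{(m)}$ has orthonormal columns, consistent with it being a change of orthonormal basis.
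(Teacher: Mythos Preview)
Your proposal is sound and would succeed. Note, though, that the paper does not actually prove Theorem~\ref{theorem:SS}: it is quoted from~\cite{Slevinsky-ACHA-17}. The paper's own argument appears instead in the proof of the analogous Jacobi result, Theorem~\ref{theorem:Jacobi}, and that is the natural point of comparison.

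For the explicit connection coefficients your route coincides with the paper's: rewrite the associated Legendre functions as (weighted, normalized) Jacobi polynomials and invoke the single-parameter Jacobi connection formula~\eqref{eq:Jacobisingleparamcnxn}, then track the normalization constants. For the Givens factorization, however, you take a genuinely different path. You propose to \emph{build up} $C^{(m)}$ from $I_{(n+3)\times(n+1)}$ by multiplying the rotations in, showing by induction that each entry of the product is a telescoping string of sines capped by a cosine, and then matching that string to the closed form~\eqref{eq:SScoefficients}. The paper instead \emph{tears down} $C^{(m)}$: it applies $(G_0^{(m)})^\top, (G_1^{(m)})^\top,\ldots$ from the left to zero the subdiagonal entries one column at a time, and the crucial observation is that because the columns of $C^{(m)}$ are orthonormal (it is a change of orthonormal basis), zeroing the single subdiagonal entry in a column forces the entire corresponding \emph{row} to collapse to a standard unit vector. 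This orthonormality argument dispatches the factorization in a few lines without ever writing down or verifying the telescoping product. Your approach is more explicit and constructive, and it yields the entrywise formula for $C^{(m)}$ as a by-product; the paper's is shorter and more structural, but relies on the reader seeing why a single zero propagates across the row. Either is a complete proof; you mention the orthonormality of $C^{(m)}$ only as a consistency check at the end, whereas for the paper it is the engine.
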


Algorithm~2.5 in~\cite{Slevinsky-ACHA-17} describes a backward stable application of the Givens rotations with the pleasant result that:
\[
\fl(G_n^{(m)}) = G_n^{(m)} + E,\quad{\rm where} \norm{E}_2 = \OO(\epsilon_{\rm mach}),
\]
where $\epsilon_{\rm mach}$ is the unit of least working precision.

\section{Conquering the pre-computation of the connection problem}

The steps required by the spherical harmonic connection problem are illustrated in Figure~\ref{fig:SHT}. At first, an algorithm converts higher-order layers of the spherical harmonics into expansions with orders zero and one. Then, these coefficients are rapidly transformed into their Fourier coefficients. The most expensive part of the transformation is the conversion from high orders to low orders.

\begin{figure}[htbp]
\begin{center}
\begin{tikzpicture}[scale=0.657]
\draw[black, thick]
    (0,0) -- (6,0)
    (1,1) -- (6,1)
    (2,2) -- (6,2)
    (3,3) -- (6,3)
    (5,5) -- (6,5)
    ;
\filldraw[black]
    (0,0) circle (2pt)
    (6,0) circle (2pt)
    (1,1) circle (2pt)
    (6,1) circle (2pt)
    (2,2) circle (2pt)
    (6,2) circle (2pt)
    (3,3) circle (2pt)
    (6,3) circle (2pt)
    (5,5) circle (2pt)
    (6,5) circle (2pt)
    (6,6) circle (2pt)
    ;
\node (ellipsis1) at (4.5,4) {$\iddots$};
\node (ellipsis2) at (5.5,4) {$\vdots$};
\node[anchor=west] (zero) at (6.25,0) {$\tilde{P}_\ell^0$};
\node[anchor=west] (one) at (6.25,1) {$\tilde{P}_\ell^1$};
\node[anchor=west] (two) at (6.25,2) {$\tilde{P}_\ell^2$};
\node[anchor=west] (three) at (6.25,3) {$\tilde{P}_\ell^3$};
\node[anchor=west] (penultimate) at (6.25,5) {$\tilde{P}_\ell^{\ell-1}$};
\node[anchor=west] (ultimate) at (6.25,6) {$\tilde{P}_\ell^\ell$};
\draw[black, thick]
    (9,0) -- (15,0)
    (10,1) -- (15,1)
    (9,2) -- (15,2)
    (10,3) -- (15,3)
    (9,5) -- (15,5)
    (10,6) -- (15,6)
    ;
\filldraw[black]
    (9,0) circle (2pt)
    (15,0) circle (2pt)
    (10,1) circle (2pt)
    (15,1) circle (2pt)
    (9,2) circle (2pt)
    (15,2) circle (2pt)
    (10,3) circle (2pt)
    (15,3) circle (2pt)
    (9,5) circle (2pt)
    (15,5) circle (2pt)
    (10,6) circle (2pt)
    (15,6) circle (2pt)
    ;
\node (ellipsis1) at (11,4) {$\vdots$};
\node (ellipsis2) at (14,4) {$\vdots$};
\node[anchor=west] (zero) at (15.25,0) {$\tilde{P}_\ell^0$};
\node[anchor=west] (one) at (15.25,1) {$\tilde{P}_\ell^1$};
\node[anchor=west] (two) at (15.25,2) {$\tilde{P}_\ell^0$};
\node[anchor=west] (three) at (15.25,3) {$\tilde{P}_\ell^1$};
\node[anchor=west] (penultimate) at (15.25,5) {$\tilde{P}_\ell^0$};
\node[anchor=west] (ultimate) at (15.25,6) {$\tilde{P}_\ell^1$};
\draw[black, thick]
    (18,0) -- (24,0)
    (18,1) -- (23,1)
    (18,2) -- (24,2)
    (18,3) -- (23,3)
    (18,5) -- (24,5)
    (18,6) -- (23,6)
    ;
\filldraw[black]
    (18,0) circle (2pt)
    (24,0) circle (2pt)
    (18,1) circle (2pt)
    (23,1) circle (2pt)
    (18,2) circle (2pt)
    (24,2) circle (2pt)
    (18,3) circle (2pt)
    (23,3) circle (2pt)
    (18,5) circle (2pt)
    (24,5) circle (2pt)
    (18,6) circle (2pt)
    (23,6) circle (2pt)
    ;
\node (ellipsis1) at (19,4) {$\vdots$};
\node (ellipsis2) at (22,4) {$\vdots$};
\node[anchor=west] (zero) at (24.25,0) {$T_\ell$};
\node[anchor=west] (one) at (23.25,1) {$\sin\theta U_\ell$};
\node[anchor=west] (two) at (24.25,2) {$T_\ell$};
\node[anchor=west] (three) at (23.25,3) {$\sin\theta U_\ell$};
\node[anchor=west] (penultimate) at (24.25,5) {$T_\ell$};
\node[anchor=west] (ultimate) at (23.25,6) {$\sin\theta U_\ell$};
\node (firstarrow) at (8.1,3) {$\Longrightarrow$};
\node (secondarrow) at (17.1,3) {$\Longrightarrow$};
\end{tikzpicture}
\caption{The spherical harmonic transform proceeds in two steps. Firstly, normalized associated Legendre functions are converted to normalized associated Legendre functions of order zero and one. Then, these intermediate expressions are re-expanded in trigonometric form.}
\label{fig:SHT}
\end{center}
\end{figure}
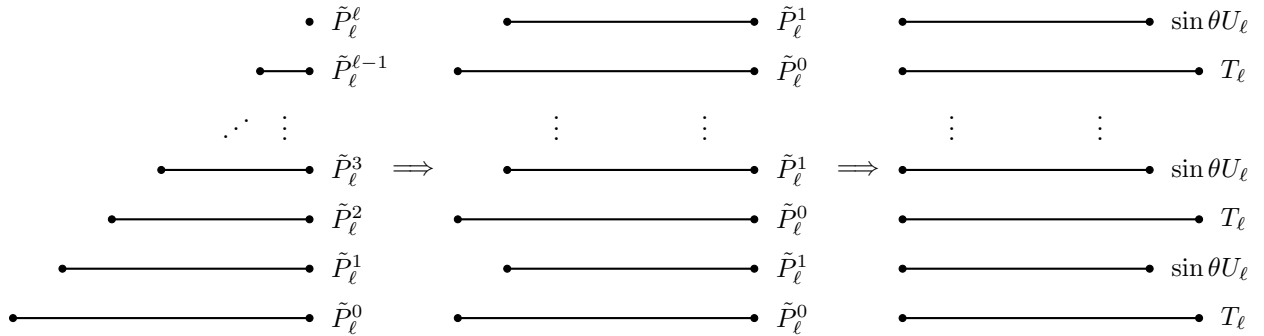

The $\OO(n)$ application of the Givens rotations connecting neighbouring layers allows a partial skeletonization of the pre-computation~\cite{Slevinsky-ACHA-17}. This partial skeletonization reduces the number of layers that require a full butterfly factorization by a constant fraction. However, it still requires {\em all} the Givens rotations to be applied to a pair of dense matrices that are to be compressed, resulting in an $\OO(n^3\log n)$ pre-computation\footnote{though it appears to cost only ten times the execution for presented data.}.

\begin{figure}[htbp]
\begin{center}
\begin{tikzpicture}[scale=12]
\draw[stealth-stealth, thick] (-0.0625,0) -- (1.0625,0);
\foreach \x in {0.5}
    \draw[bend right = 75, line width = 2.0] (\x,0) edge[-stealth] (\x-0.5,0);
\foreach \x/\xtext in { 0/0, 1/n} {
  \draw[shift={(\x,0)}] (0,0.02) -- (0,-0.02);
  \draw(\x,0) node[below,scale=2.5] {$\xtext$};
  }
\foreach \x/\xtext in {0.5/\frac{n}{2}} {
  \draw[shift={(\x,0)}] (0,0.01) -- (0,-0.01);
  \draw(\x,0) node[below,scale=1.75] {$\xtext$};
  }
    \pgfmathparse{2}\edef\i{\pgfmathresult}
    \pgfmathparse{2^\i}\edef\jmax{\pgfmathresult}
    \pgfmathparse{int(2^\i)}\edef\xd{\pgfmathresult}
    \pgfmathparse{(0.5^(\i))}\edef\x{\pgfmathresult}
    \pgfmathparse{4.0/((\i+1))}\edef\scl{\pgfmathresult}
    \draw[shift={(\x,0)}] (0,0.005*\scl) -- (0,-0.005*\scl);
    \draw(\x,0) node[below,scale=\scl] {$\frac{n}{\xd}$};
    \foreach \j in {3} {
        \pgfmathparse{\j*(0.5^(\i))}\edef\x{\pgfmathresult};
      \draw[shift={(\x,0)}] (0,0.005*\scl) -- (0,-0.005*\scl);
        \draw(\x,0) node[below,scale=\scl] {$\frac{\j n}{\xd}$};
        }
\foreach \i in {3,...,4} {
    \pgfmathparse{2^\i}\edef\jmax{\pgfmathresult}
    \pgfmathparse{int(2^\i)}\edef\xd{\pgfmathresult}
    \pgfmathparse{(0.5^(\i))}\edef\x{\pgfmathresult}
    \pgfmathparse{4.0/((\i+1))}\edef\scl{\pgfmathresult}
    \draw[shift={(\x,0)}] (0,0.005*\scl) -- (0,-0.005*\scl);
    \draw(\x,0) node[below,scale=\scl] {$\frac{n}{\xd}$};
    \foreach \j in {3,5,...,\jmax} {
        \pgfmathparse{\j*(0.5^(\i))}\edef\x{\pgfmathresult};
      \draw[shift={(\x,0)}] (0,0.005*\scl) -- (0,-0.005*\scl);
        \draw(\x,0) node[below,scale=\scl] {$\frac{\j n}{\xd}$};
        }
    }
    \pgfmathparse{5}\edef\i{\pgfmathresult}
    \pgfmathparse{2^\i}\edef\jmax{\pgfmathresult}
    \pgfmathparse{int(2^\i)}\edef\xd{\pgfmathresult}
    \pgfmathparse{(0.5^(\i))}\edef\x{\pgfmathresult}
    \pgfmathparse{4.0/((\i+1))}\edef\scl{\pgfmathresult}
    \draw(\x,0) node[below,scale=\scl] {$\stackrel{~}{\cdots}$};
    \foreach \j in {3,5,...,\jmax} {
        \pgfmathparse{\j*(0.5^(\i))}\edef\x{\pgfmathresult};
        \draw(\x,0) node[below,scale=\scl] {$\stackrel{~}{\cdots}$};
        }
\foreach \i in {1,...,5}
    \pgfmathparse{0.5^(\i+1)}\edef\xs{\pgfmathresult}
    \pgfmathparse{3*(0.5^(\i+1))}\edef\xstep{\pgfmathresult}
    \pgfmathparse{1-0.5^(\i+1)}\edef\xf{\pgfmathresult}
    \pgfmathparse{75/((\i+1)^0.125)}\edef\br{\pgfmathresult}
    \pgfmathparse{2.0/((\i+1))}\edef\lw{\pgfmathresult}
    \foreach \x in {\xs,\xstep,...,\xf}
        \draw[bend right = \br, line width = \lw] (\x,0) edge[-stealth] (\x-\xs,0);
\end{tikzpicture}
\caption{The complete skeletonization of the pre-computation via a dyadic partitioning. Each arrow indicates the construction of an accelerated spectral decomposition between expansions in the higher-order layer and the lower-order layer.}
\label{fig:Skeleton}
\end{center}
\end{figure}
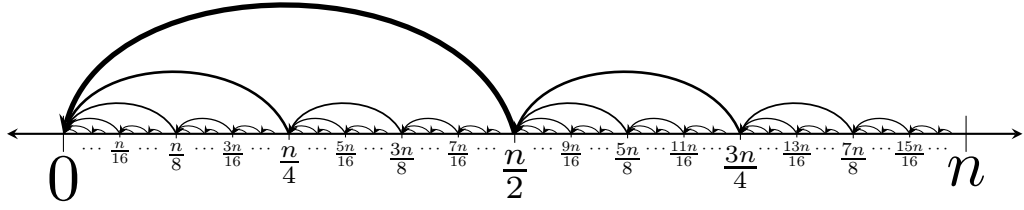

In Figure~\ref{fig:Skeleton}, we propose a {\em complete} skeletonization by dyadically partitioning the pre-computation; the arrows denote the construction of an accelerated spectral decomposition of the connection problem.

In Table~\ref{table:Costs}, the computational complexities are tabulated to compute Givens and spectral representations of the connection problems. The costs for the Givens rotations are tabulated assuming they are used to convert all spherical harmonic layers $\{m+2, m+4,\ldots, \mu\}$ down to layer $m$, resulting in the total execution time of order $\OO((\mu-m)^2n)$. Once neighbouring layers are converted to batches of representations in the same bases, fast spectral decompositions are used to finish the transformation to expansions with orders zero and one by traversing the binary tree.

Any given layer sees at most $\log_2\lceil\frac{n}{\mu-m}\rceil$ spectral decompositions on its transition to an expansion in the zeroth or first order, depending on the even-odd symmetry. To overlay a dyadic partitioning, the total number of spectral decompositions that require pre-computation is:
\[
\#\{\hbox{Spectral Decompositions}\} = \sum_{i=1}^{\log_2\lceil\frac{n}{\mu-m}\rceil} 2^{i-1} = \OO\left(\frac{n}{\mu-m}\right).
\]

\begin{table}
\caption{The costs of a dyadically partitioned spherical harmonic transform.}
\begin{center}
\begin{tabular}{cccc}
\sphline
Cost & Pre-computation & Execution & Storage\\
\sphline
All Givens rotations & $\OO(n^2)$ & $\OO((\mu-m)^2n)$ & $\OO(n^2)$\\
A spectral decomposition & $\Pi(n)$ & $\Phi(n)$ & $\Sigma(n)$\\
\sphline
Summary & $\OO(n^2) + \OO(\tfrac{n}{\mu-m})\Pi(n)$ & $\OO((\mu-m)^2n) + n\Phi(n)\log_2\lceil\tfrac{n}{\mu-m}\rceil$ & $\OO(n^2)+\OO(\tfrac{n}{\mu-m})\Sigma(n)$\\
\sphline
If $\mu-m = \OO(\sqrt{n})$ & $\OO(n^2) + \OO(\sqrt{n})\Pi(n)$ & $\OO(n^2) + n\Phi(n)\OO(\log_2 n)$ & $\OO(n^2)+\OO(\sqrt{n})\Sigma(n)$\\
\sphline
\end{tabular}
\end{center}
\label{table:Costs}
\end{table}%

In Table~\ref{table:Costs}, a fast spectral decomposition costs $\Pi(n)$ to pre-compute, $\Phi(n)$ to execute, and $\Sigma(n)$ to store. Under the assumption that $\mu-m = \OO(\sqrt{n})$, then the execution time is nearly asymptotically optimal if $\Phi(n) = \OO(n\log n)$. Furthermore, if the Givens rotations are computed on-the-fly, then pre-computation and storage are superoptimal in the sense that they require fewer floating-point operations or units of memory than the number of degrees of freedom in the spherical harmonic expansion. This holds so long as $\Pi(n) = \OO(n\log n)$ and $\Sigma(n) = \OO(n\log n)$ as well.

Regarding implementation, the dyadic partitioning creates batches of coefficients whose transforms may take full advantage of level-III BLAS~\cite{OpenBLAS} compiler optimizations. Therefore, while the execution adds a logarithmic factor accounting for the number of levels, we anticipate any algorithm that skeletonizes completely to exhibit superior performance in practice.

\subsection{Fast harmonic eigenfunction transforms}

Normalized associated Legendre functions of degree $\ell$ and order $m$ are eigenfunctions of the linear differential equation:
\begin{equation}\label{eq:SLP_ALF}
-(1-x^2)\dfrac{{\rm d}}{{\rm d}x}\left[(1-x^2)\dfrac{{\rm d}}{{\rm d}x}\tilde{P}_\ell^m(x)\right] + m^2\tilde{P}_\ell^m(x) = \ell(\ell+1)(1-x^2)\tilde{P}_\ell^m(x), \quad \abs{m} \le \ell.
\end{equation}
If we expand normalized associated Legendre functions of order $\mu$ in the basis $\tilde{P}_\ell^m(x)$:
\[
\tilde{P}_n^\mu(x) = \sum_{\ell} c_{\ell,n}^{m,\mu} \tilde{P}_\ell^m(x),
\]
then we may rewrite the Sturm--Liouville eigenproblem in Eq.~\eqref{eq:SLP_ALF} more conveniently. If we symbolically let $\MM$ denote the operation of multiplying the basis by $1-x^2$, and if we let $\DD$ denote the scaling of the basis by $\ell(\ell+1)$ for $\abs{m}\le\ell$, we have the generalized eigenvalue problem:
\begin{equation}\label{eq:SHT}
\left(\MM \DD + (\mu^2-m^2)\II\right)u = \lambda \MM u.
\end{equation}
Due to the recurrence relation:
\[
x\tilde{P}_{\ell+m}^m = \sqrt{\dfrac{(\ell+1)(\ell+2m+1)}{(2\ell+2m+1)(2\ell+2m+3)}}\tilde{P}_{\ell+m+1}^m + \sqrt{\dfrac{\ell(\ell+2m)}{(2\ell+2m-1)(2\ell+2m+1)}}\tilde{P}_{\ell+m-1}^m,
\]
multiplication by $1-x^2$ is represented as the symmetric and positive-definite operator~\cite[see also Lemma~2.12]{Tygert-227-4260-08}:
\[
\MM = \begin{pmatrix} a_1 & 0 & b_1\\ 0 & a_2 & & b_2\\ b_1 & & a_3 & & b_3\\ & \ddots & & \ddots & & \ddots\\ & & b_{n-2} & & a_n & & b_n\\ & & & \ddots & & \ddots & & \ddots\end{pmatrix},
\]
where:
\[
a_n = \dfrac{2(n^2+2mn+2m^2-n-m-1)}{(2n+2m-3)(2n+2m+1)},\quad{\rm and}\quad b_n = -\sqrt{\dfrac{n(n+1)(n+2m)(n+2m+1)}{(2n+2m-1)(2n+2m+1)^2(2n+2m+3)}}.
\]
Unfortunately, the symmetry in $\MM$ is destroyed by the column scaling in $\DD$. Rokhlin and Tygert~\cite{Rokhlin-Tygert-27-1903-06} realized that the generalized eigenvalue problem\footnote{In their formul\ae, $m=1,2$, whereas {\em all integer values} of $m\ne0$ are required to conquer the pre-computation.} is symmetrized when multiplied by $\MM^{-1}$ from the left, resulting in:
\[
\left(\DD + (\mu^2-m^2)\MM^{-1} \right)u = \lambda u.
\]
This symmetric diagonal-plus-semiseparable regular eigenvalue problem is the starting point for their fast spherical harmonic transform~\cite{Rokhlin-Tygert-27-1903-06}: the FMM~\cite{Greengard-Rokhlin-73-325-87} accelerates Chandrasekaran and Gu's divide-and-conquer algorithm for symmetric diagonal-plus-semiseparable eigenvalue problems~\cite{Chandrasekaran-Gu-96-723-04}. To conquer the pre-computation, the inverse multiplication operator would be required for the interconnection between layers.
\begin{theorem}\label{theorem:MInverse}
The inverse of the multiplication operator is given by the symmetric semiseparable operator:
\[
[\MM^{-1}]_{\ell,n} = 
\left\{\begin{array}{ccc} \dfrac{\sqrt{(2\ell+2m+1)(2n+2m+1)}}{2m}\sqrt{\dfrac{(\ell+2m)!}{\ell!}\dfrac{n!}{(n+2m)!}}, & \for & \ell \le n,\quad \ell+n\hbox{ even},\\
\dfrac{\sqrt{(2\ell+2m+1)(2n+2m+1)}}{2m}\sqrt{\dfrac{\ell!}{(\ell+2m)!}\dfrac{(n+2m)!}{n!}}, & \for & \ell > n,\quad \ell+n\hbox{ even},\\
0, & & otherwise.
\end{array} \right.
\]
\end{theorem}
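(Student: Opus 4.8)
The plan is to recast $\MM$ in a form that exposes the structure of its inverse and then to verify the claimed formula by checking $\MM\,[\MM^{-1}]=I$ directly; since $\MM$ is banded (pentadiagonal) and $[\MM^{-1}]$ is symmetric, a one-sided check suffices. First I would read off from the displayed three-term recurrence the symmetric tridiagonal Jacobi operator $J$ of multiplication by $x$ in the basis $\tilde{P}_{\ell+m}^m$, indexing sequences by $\ell=0,1,2,\dots$ so that $J_{\ell,\ell+1}=J_{\ell+1,\ell}=\gamma_\ell:=\sqrt{(\ell+1)(\ell+2m+1)/\bigl((2\ell+2m+1)(2\ell+2m+3)\bigr)}$ and $\gamma_{-1}:=0$. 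Since multiplication by $1-x^2$ is the identity minus the square of multiplication by $x$, one has $\MM=I-J^2$, with $\MM_{\ell,\ell}=1-\gamma_{\ell-1}^2-\gamma_\ell^2$ and $\MM_{\ell,\ell+2}=\MM_{\ell+2,\ell}=-\gamma_\ell\gamma_{\ell+1}$; these agree with the displayed $a_n,b_n$ after the index shift $n\mapsto n-1$, and I would record the standing assumption $m\ge1$. Writing $p_\ell:=\sqrt{2\ell+2m+1}\,\sqrt{(\ell+2m)!/\ell!}$ and $q_\ell:=\sqrt{2\ell+2m+1}\,\sqrt{\ell!/(\ell+2m)!}$, so that $p_\ell q_\ell=2\ell+2m+1$, the claimed inverse is precisely the matrix that is, within each parity class, the symmetric semiseparable matrix with entry $\tfrac{1}{2m}\,p_{\min(\ell,n)}q_{\max(\ell,n)}$ (and zero when $\ell+n$ is odd, since $\MM$ is parity-block-diagonal).

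The heart of the argument would be a handful of elementary identities for the sequences $P=(p_\ell)$ and $Q=(q_\ell)$. Cancelling the square-root normalizations and simplifying ratios of consecutive factorials, I expect $\gamma_\ell p_{\ell+1}=\tfrac{\ell+2m+1}{2\ell+2m+1}p_\ell$ and $\gamma_\ell q_{\ell+1}=\tfrac{\ell+1}{2\ell+2m+1}q_\ell$ for all $\ell\ge0$, together with $\gamma_{\ell-1}p_{\ell-1}=\tfrac{\ell}{2\ell+2m+1}p_\ell$ and $\gamma_{\ell-1}q_{\ell-1}=\tfrac{\ell+2m}{2\ell+2m+1}q_\ell$ for $\ell\ge1$; the first of these last two also holds at $\ell=0$ with the convention $\gamma_{-1}:=0$ because its numerator vanishes there, whereas the second genuinely fails at $\ell=0$ — this single mismatch is the engine of the proof. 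Summing the two $p$-relations gives $(JP)_\ell=p_\ell$ for every $\ell$, i.e.\ $JP=P$, so $P$ is a formal (necessarily non-$\ell^2$) eigenvector of $J$ at eigenvalue $1$, whence $\MM P=(I-J^2)P=0$. Summing the two $q$-relations gives $(JQ)_\ell=q_\ell$ for $\ell\ge1$, while $(JQ)_0=\gamma_0q_1=\tfrac1{2m+1}q_0$; hence $JQ=Q-\tfrac{2m}{2m+1}q_0\,e_0$ and $\MM Q=Q-J^2Q=\tfrac{2m}{2m+1}q_0\,(e_0+\gamma_0 e_1)$, so within each parity class $\MM Q$ vanishes everywhere except at the leading index ($\ell=0$ for the even class, $\ell=1$ for the odd class).

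With these two facts I would then expand the banded product $\MM\,[\MM^{-1}]$ entry by entry. Fixing a column $n$ and a row $\ell$ of the same parity: if $\ell\le n-2$ the three relevant entries of $[\MM^{-1}]$ in column $n$ all use the ``$\min=\ell$'' branch and the row collapses to $\tfrac{q_n}{2m}(\MM P)_\ell=0$; if $\ell\ge n+2$ they all use the ``$\min=n$'' branch and it collapses to $\tfrac{p_n}{2m}(\MM Q)_\ell=0$ (legitimate since then $\ell\ge2$). On the diagonal $\ell=n$, using $(\MM P)_n=0$ to eliminate the two lower terms leaves $\tfrac{1}{2m}\MM_{n,n+2}\,(p_nq_{n+2}-p_{n+2}q_n)$; setting $W_n:=p_nq_{n+2}-p_{n+2}q_n$, the $p$- and $q$-recurrences yield the telescoping $\MM_{n,n+2}W_n=\MM_{n-2,n}W_{n-2}$ for $n\ge2$, so it remains only to evaluate this ``constant Wronskian'' at the two leading indices. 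There $(\MM P)_0=0$ and $(\MM P)_1=0$ reduce it to $p_0(\MM Q)_0$ and $p_1(\MM Q)_1$, and substituting $(\MM Q)_0=\tfrac{2m}{2m+1}q_0$, $(\MM Q)_1=\tfrac{2m}{2m+1}q_0\gamma_0$ together with $p_0q_0=2m+1$ and $p_1q_0\gamma_0=2m+1$ should give $\MM_{0,2}W_0=\MM_{1,3}W_1=2m$. Hence every off-diagonal entry of $\MM\,[\MM^{-1}]$ vanishes and every diagonal entry equals $1$, and by symmetry of $[\MM^{-1}]$ this is a two-sided inverse; note the argument is uniform in the integer $m\ge1$.

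The step I expect to be the main obstacle is not any individual computation — the factorial identities are routine — but rather the bookkeeping around the two leading indices $0$ and $1$, where one off-diagonal neighbour of $\MM$ is absent: the negative-factorial convention $1/j!=0$ for $j<0$ has to absorb this for the homogeneous relation $\MM P=0$, while the inhomogeneity of $\MM Q$ is concentrated precisely there, and it is exactly this defect that must come out right in order to produce the prefactor $\tfrac{1}{2m}$. As an independent sanity check I would re-derive the closed form analytically: the identity $\tilde{P}_{\ell+m}^m(x)\propto(1-x^2)^{m/2}C_\ell^{(m+1/2)}(x)$ turns $[\MM^{-1}]_{\ell,n}$ into $\int_{-1}^1\tilde{P}_{\ell+m}^m(x)\tilde{P}_{n+m}^m(x)\,(1-x^2)^{-1}\,{\rm d}x$, an integral of two Gegenbauer polynomials of parameter $m+\tfrac12$ against the weight of parameter $m-\tfrac12$, which the one-step connection formula $C_n^{(\lambda+1)}=\sum_{0\le j\le n,\ j\equiv n\,(2)}\tfrac{j+\lambda}{\lambda}\,C_j^{(\lambda)}$ evaluates in closed form.
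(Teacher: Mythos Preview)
Your argument is correct but follows a genuinely different route from the paper's. The paper proves the formula analytically: it writes $[\MM^{-1}]_{\ell,n}=\int_{-1}^1 \tilde P_{\ell+m}^m\tilde P_{n+m}^m/(1-x^2)\,\ud x$, applies the order-ladder recurrence
\[
\frac{1}{\sqrt{1-x^2}}\tilde P_{\ell+m}^m \;\propto\; \sqrt{(\ell+1)(\ell+2)}\,\tilde P_{\ell+m+1}^{m-1}+\sqrt{(\ell+2m+1)(\ell+2m+2)}\,\tilde P_{\ell+m+1}^{m+1}
\]
to each factor, and then evaluates the four resulting integrals by orthonormality and by the connection coefficients of Theorem~\ref{theorem:SS}. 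Your main argument is instead a direct algebraic verification: you recognise the claimed matrix as the parity-block semiseparable Green's matrix $\tfrac{1}{2m}p_{\min}q_{\max}$, show that $P$ and $Q$ are formal solutions of $(I-J)u=0$ with $Q$ defective only at the boundary index, and conclude $\MM[\MM^{-1}]=I$ via the constant-Wronskian telescoping $\MM_{n,n+2}W_n=\MM_{n-2,n}W_{n-2}$. Your closing ``sanity check'' via Gegenbauer connection is the one place you brush against the paper's method, but it is not your main line.

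What each buys: the paper's proof \emph{derives} the formula rather than verifies a given one, and ties it to the same connection machinery (Theorem~\ref{theorem:SS}) that drives the rest of the paper. Your approach is self-contained --- it needs neither the ladder identity nor Theorem~\ref{theorem:SS} --- and is the standard Green's-function route for inverting banded symmetric operators, so it would transplant more readily to other $\MM$'s. One small remark: your ``by symmetry of $[\MM^{-1}]$ this is a two-sided inverse'' is fine, since $\MM A=I$ with $\MM,A$ symmetric gives $A\MM=(\MM A)^\top=I$; you might state that explicitly to pre-empt the infinite-dimensional worry.
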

\begin{proof}
Formally, the inverse of the multiplication operator has entries determined by the integrals:
\[
[\MM^{-1}]_{\ell,n} = \int_{-1}^1 \dfrac{\tilde{P}_{\ell+m}^m(x)\tilde{P}_{n+m}^m(x)}{1-x^2}\ud x, \quad{\rm for}\quad \ell,n \ge 0.
\]
Using the recurrence:
\[
\dfrac{1}{\sqrt{1-x^2}}\tilde{P}_{\ell+m}^m = \frac{1}{2m}\sqrt{\dfrac{\ell+m+\frac{1}{2}}{\ell+m+\frac{3}{2}}}\left(\sqrt{(\ell+1)(\ell+2)}\tilde{P}_{\ell+m+1}^{m-1} + \sqrt{(\ell+2m+1)(\ell+2m+2)}\tilde{P}_{\ell+m+1}^{m+1}\right),
\]
we may write:
\begin{align*}
[\MM^{-1}]_{\ell,n} & = \dfrac{1}{(2m)^2}\sqrt{\dfrac{(\ell+m+\frac{1}{2})(n+m+\frac{1}{2})}{(\ell+m+\frac{3}{2})(n+m+\frac{3}{2})}}\\
& \times\Bigg( \sqrt{(\ell+1)(\ell+2)(n+1)(n+2)}\int_{-1}^1\tilde{P}_{\ell+m+1}^{m-1}(x)\tilde{P}_{n+m+1}^{m-1}(x)\ud x\\
& \quad+ \sqrt{(\ell+1)(\ell+2)(n+2m+1)(n+2m+2)}\int_{-1}^1\tilde{P}_{\ell+m+1}^{m-1}(x)\tilde{P}_{n+m+1}^{m+1}(x)\ud x\\
& \quad+ \sqrt{(\ell+2m+1)(\ell+2m+2)(n+1)(n+2)}\int_{-1}^1\tilde{P}_{\ell+m+1}^{m+1}(x)\tilde{P}_{n+m+1}^{m-1}(x)\ud x\\
& \quad+ \sqrt{(\ell+2m+1)(\ell+2m+2)(n+2m+1)(n+2m+2)}\int_{-1}^1\tilde{P}_{\ell+m+1}^{m+1}(x)\tilde{P}_{n+m+1}^{m+1}(x)\ud x\Bigg).
\end{align*}
The first and last integrals are given by orthonormality, while the second and third integrals may be evaluated by Theorem~\ref{theorem:SS}.
\end{proof}

While Theorem~\ref{theorem:MInverse} shows that it is possible, at least theoretically, to conquer the pre-computation with the inverse multiplication operator, it demands an unconventional floating-point number system with an exceptionally long mantissa to account for the severe factorial growth and decay in the semiseparable factors. To remedy this issue, we develop an alternative approach to the fast spectral decomposition that is more favourable to IEEE floating-point arithmetic.

If we take the Cholesky factorization of $\MM = \RR^\top\RR$, if we consider new generalized eigenvectors $v = \RR^{-\top} u$, and if we multiply Eq.~\eqref{eq:SHT} by $\RR^{-\top}$ from the left, then we have transformed the problem to:
\[
\left( \RR \DD \RR^\top + (\mu^2-m^2)\II\right) v = \lambda \RR\RR^\top v.
\]
This generalized eigenvalue problem is symmetric-definite and pentadiagonal\footnote{It is in fact tridiagonal if the even-odd symmetry is highlighted by a perfect shuffle.}, allowing for a fast spectral decomposition~\cite{Borges-Gragg-11-93,Gu-Eisenstat-16-172-95}. Furthermore, the generalized eigenvectors $\VV$ are $\RR\RR^\top$-orthogonal in the sense that:
\[
\VV^\top \RR \RR^\top \VV = \II.
\]

In Appendix~\ref{appendix:SHT}, we derive explicit formul\ae~for the Cholesky factor $\RR$ and the products $\RR\RR^\top$ and $\RR\DD\RR^\top$.

The compromise of reformulating the eigenvalue problem in a generalized sense is the insurgence of ill-conditioning from the Cholesky factor which appears to be of order $\OO(n)$. An alternative strategy would be to explore fast eigensolvers for symmetric diagonal-plus-generator-representable matrices, since the entries of $\MM^{-1}$ may be computed to high relative accuracy by a product of Givens rotations through Eq.~\eqref{eq:GRcoefficients}.

\begin{remark}
The reformulation as a symmetric-definite banded generalized eigenvalue problem appears to seek the second family of linearly independent solutions to the Sturm--Liouville problem when $\mu = m$; thus, it appears not to recover eigenfunctions expressible as the identity operator. This is likely due to the omission of boundary conditions. When $\mu-m$ is a positive even integer, the reformulation appears to seek the eigenfunctions whose expansions terminate, as desired.
\end{remark}

\section{Divide-and-conquer symmetric tridiagonal (generalized) eigenvalue solvers}

Let $T\in\R^{n\times n}$ be a symmetric tridiagonal matrix. The spectral decomposition:
\[
T = Q\Lambda Q^\top,
\]
may be performed in $\OO(n^2)$ operations by divide-and-conquer algorithms~\cite{Cuppen-36-177-81,Gu-Eisenstat-16-172-95}, bisection, and the method of multiple relatively robust representations~\cite{Dhillon-Parlett-387-1-04,Dhillon-Parlett-Vomel-32-533-06}. When divide-and-conquer algorithms are accelerated by the Fast Multipole Method (FMM)~\cite{Greengard-Rokhlin-73-325-87}, the eigenvalues are obtained in $\OO(n\log n)$ operations, and a structured representation of the eigenvectors allows for the matrix vector products $Qx$ and $Q^\top x$ in $\OO(n\log n)$ operations as well.

Let $S\in\R^{n\times n}$ be a symmetric positive-definite tridiagonal matrix. The generalized eigenvalues and eigenvectors diagonalize the pencil $(T,S)$:
\[
V^\top (T-\lambda S)V = \Lambda - \lambda I.
\]
A divide-and-conquer algorithm is derived by Borges and Gragg~\cite{Borges-Gragg-11-93} that is technically similar to the symmetric tridiagonal divide-and-conquer algorithm of Gu and Eisenstat~\cite{Gu-Eisenstat-16-172-95}. The main addition is the use of a sparse Cholesky factorization of a symmetric positive-definite arrowhead matrix to relate a symmetric-definite arrowhead generalized eigenvalue problem to a regular one.

\subsection{Divide}

If the tridiagonal matrix $T$ is partitioned as:
\[
T = \begin{pmatrix} T_1 & a\\ a^\top & c & b^\top\\ & b & T_2\end{pmatrix},
\]
where the dimensions of $T_1$ and $T_2$ are nearly half of those of $T$, then the spectral decompositions of the symmetric tridiagonal matrices $T_1 = Q_1\Lambda_1Q_1^\top$ and $T_2 = Q_2\Lambda_2Q_2^\top$ allow for a similarity transformation to the symmetric arrowhead matrix:
\begin{equation}\label{eq:SymTridivide}
\begin{pmatrix} Q_1\\ & & 1\\& Q_2\end{pmatrix}^\top \begin{pmatrix} T_1 & a\\ a^\top & c & b^\top\\ & b & T_2\end{pmatrix} \begin{pmatrix} Q_1\\ & & 1\\& Q_2\end{pmatrix} = \begin{pmatrix} \Lambda_1 & & Q_1^\top a\\ & \Lambda_2 & Q_2^\top b\\ a^\top Q_1 & b^\top Q_2 & c\end{pmatrix}.
\end{equation}
The same division of the symmetric tridiagonal eigenproblems $T_1$ and $T_2$ may be used recursively until the dimensions are sufficiently small for conventional eigensolvers to be competitive.

\subsection{Conquer}

Consider the symmetric arrowhead matrix $A\in\R^{n\times n}$ which could arise from Eq.~\eqref{eq:SymTridivide}:
\[
A = \begin{pmatrix} D & b\\ b^\top & c\end{pmatrix} = \begin{pmatrix} a_1 & & & & b_1\\ & a_2 & & & b_2\\ & & \ddots & & \vdots\\ & & & a_{n-1} & b_{n-1}\\ b_1 & b_2 & \cdots & b_{n-1} & c\end{pmatrix},
\]
where the diagonal elements are nondecreasing $a_1 \le a_2 \le \cdots \le a_{n-1}$. If we perform the $LU$ factorization of the matrix $A-\lambda I$, then:
\[
A - \lambda I = \begin{pmatrix} I & 0\\ b^\top (D-\lambda I)^{-1} & 1\end{pmatrix} \begin{pmatrix} D-\lambda I & b\\ 0^\top & -f(\lambda)\end{pmatrix},
\]
where $f$ is the rational Pick function given by~\cite{Borges-Gragg-11-93}:
\[
f(\lambda) = \lambda - c + \sum_{i=1}^{n-1} \frac{b_i^2}{a_i-\lambda}.
\]
Based on the graph of $f$, the roots of $f$ interlace the elements of the arrowhead's shaft:
\[
\lambda_1 \le a_1 \le \lambda_2 \le a_2 \le \cdots \le a_{n-1} \le \lambda_n.
\]
The roots of the Pick function correspond to the eigenvalues of $A$, and Borges and Gragg devised a cubically convergent algorithm~\cite{Borges-Gragg-11-93} by fitting another rational function:
\[
\phi_j(\lambda) = \alpha + \frac{\beta}{a_{j-1} - \lambda} + \frac{\gamma}{a_j-\lambda},\quad{\rm for}\quad 2\le j \le n-1,
\]
to the Pick function and its first and second derivatives at the current estimate for the root. Modified versions of the rational fitting are used for the first and last roots that also guarantee convergence. Note that the Pick function $f(\lambda)$ and its higher order derivatives can be rapidly evaluated at $m$ points in $\OO(m+n)$ floating-point operations by the FMM; evaluation of the Pick function at $m$ points requires a matrix-vector product with the Cauchy matrix, a procedure for which FMM acceleration is particularly well-suited.

Next, a nearby symmetric arrowhead matrix is constructed from the computed eigenvalues and the shaft of the original matrix.
\begin{theorem}[Boley and Golub~\cite{Boley-Golub-23-630-77}]\label{theorem:BoleyAndGolub}
Given a set of numbers $\{\hat{\lambda}_i\}_{i=1}^n$ and a diagonal matrix $D = \diag(a_1,\ldots,a_{n-1})$ satisfying the interlacing property:
\[
\hat{\lambda}_1 < a_1 < \hat{\lambda}_2 < \cdots < a_{n-1} < \hat{\lambda}_n,
\]
there exists a symmetric arrowhead matrix:
\[
\hat{A} = \begin{pmatrix} D & \hat{b}\\ \hat{b}^\top & \hat{c}\end{pmatrix},
\]
whose eigenvalues are $\{\hat{\lambda}_i\}_{i=1}^n$. The vector $\hat{b} = (\hat{b}_1,\ldots,\hat{b}_{n-1})^\top$ is given by:
\begin{equation}\label{eq:spike}
\hat{b}_i = {\rm sign}(b_i)\sqrt{(a_i-\hat{\lambda}_1)(\hat{\lambda}_n-a_i)\prod_{j=1}^{i-1}\dfrac{\hat{\lambda}_{j+1}-a_i}{a_j-a_i}\prod_{j=i+1}^{n-1}\dfrac{\hat{\lambda}_{j}-a_i}{a_{j}-a_i}},
\end{equation}
and the scalar $\hat{c}$ is:
\begin{equation}\label{eq:cap}
\hat{c} = \hat{\lambda}_n + \sum_{i=1}^{n-1}(\hat{\lambda}_i - a_i).
\end{equation}
\end{theorem}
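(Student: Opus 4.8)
The plan is to construct $\hat A$ explicitly and then verify that its characteristic polynomial has exactly the prescribed roots $\{\hat\lambda_i\}$. Since $\hat A$ has the arrowhead form with fixed shaft $D = \diag(a_1,\ldots,a_{n-1})$, the $LU$-factorization displayed just above (with $A$ replaced by $\hat A$) shows that
\[
\det(\hat A - \lambda I) = (-1)^{n-1}\left(\prod_{i=1}^{n-1}(a_i - \lambda)\right)\hat f(\lambda),\qquad
\hat f(\lambda) = \lambda - \hat c + \sum_{i=1}^{n-1}\frac{\hat b_i^2}{a_i - \lambda}.
\]
Thus the eigenvalues of $\hat A$ are precisely the roots of the Pick function $\hat f$, together with any $a_i$ that fails to be cancelled. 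The strategy is therefore: (i) choose $\hat c$ and $\hat b_i^2$ so that $\hat f(\lambda) = \prod_{k=1}^{n}(\hat\lambda_k - \lambda) \big/ \prod_{i=1}^{n-1}(a_i-\lambda)$ identically; (ii) read off $\hat c$ from the behaviour at $\lambda\to\infty$; (iii) read off $\hat b_i^2$ from the residues at the poles $\lambda = a_i$; (iv) check positivity of those residues, which is exactly where the strict interlacing hypothesis is used; and (v) observe that the sign ambiguity in $\hat b_i = \pm\sqrt{\hat b_i^2}$ is immaterial to the spectrum (a diagonal sign similarity leaves $D$ fixed and flips the signs of the $\hat b_i$), so we may pin it to $\operatorname{sign}(b_i)$ to stay close to the original matrix.

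First I would write the target rational function
\[
R(\lambda) = \frac{\prod_{k=1}^{n}(\hat\lambda_k - \lambda)}{\prod_{i=1}^{n-1}(a_i - \lambda)},
\]
which is a proper-plus-linear rational function: numerator degree $n$, denominator degree $n-1$, all poles simple (the $a_i$ are distinct by interlacing). Performing the partial fraction decomposition gives
\[
R(\lambda) = \lambda - \hat c + \sum_{i=1}^{n-1}\frac{r_i}{a_i - \lambda},
\]
where, expanding $R(\lambda) = -\lambda + (\text{const}) + O(1/\lambda)$ at infinity, the constant term yields
\[
\hat c = \hat\lambda_n + \sum_{i=1}^{n-1}(\hat\lambda_i - a_i),
\]
after collecting the $\lambda^{n-1}$-coefficients of numerator and denominator; this is Eq.~\eqref{eq:cap}. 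The residue at $\lambda = a_i$ is computed by $r_i = -\lim_{\lambda\to a_i}(a_i-\lambda)R(\lambda)$, i.e. the numerator evaluated at $a_i$ divided by $\prod_{j\neq i}(a_j - a_i)$, up to the overall sign bookkeeping. Grouping the factors $\hat\lambda_k - a_i$ against $a_j - a_i$ by whether $\hat\lambda_k$ lies below or above $a_i$ produces exactly the product in Eq.~\eqref{eq:spike} with $r_i = \hat b_i^2$; here the split at indices $j<i$ versus $j>i$ mirrors the interlacing $\hat\lambda_j < a_j$, and one checks term by term that every factor under the square root is positive: $a_i - \hat\lambda_1 > 0$, $\hat\lambda_n - a_i > 0$, $(\hat\lambda_{j+1}-a_i)/(a_j - a_i) > 0$ for $j < i$ since both numerator and denominator are negative, and $(\hat\lambda_j - a_i)/(a_j - a_i) > 0$ for $j > i$ since both are positive.

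The main obstacle, and the only genuinely delicate bookkeeping, is step (iv): verifying that the residue $r_i$ is strictly positive so that $\hat b_i = \sqrt{r_i}$ is real and nonzero. This is where strict interlacing (rather than merely $\le$) is essential — equality $\hat\lambda_j = a_j$ would make some factor vanish, producing $\hat b_i = 0$ and an uncancelled eigenvalue at $a_i$, breaking the correspondence. Once positivity is in hand, matching $R(\lambda)$ to $\hat f(\lambda)$ is an identity of rational functions agreeing at $n$ points and at infinity, hence everywhere, so $\det(\hat A - \lambda I) = (-1)^{n}\prod_{k=1}^{n}(\lambda - \hat\lambda_k)$ and the eigenvalues of $\hat A$ are exactly $\{\hat\lambda_k\}_{k=1}^{n}$. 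I would close by noting that the $a_i$ are genuinely eigenvalues of $D$ but not of $\hat A$ precisely because the corresponding residues are nonzero, which is the reverse direction of the interlacing phenomenon already observed for $f$ in the Conquer subsection.
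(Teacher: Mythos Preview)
The paper does not supply its own proof of this theorem: it is quoted as a cited result of Boley and Golub and is immediately followed by commentary on its use, with no proof environment. There is therefore nothing in the paper to compare your argument against.

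That said, your outline is the standard route to this result and is essentially correct. One piece of bookkeeping to tidy: you display the partial-fraction form of $R(\lambda)$ with leading term $+\lambda$, but then (correctly) note that $R(\lambda)\sim -\lambda$ at infinity. The right identification is $\hat f(\lambda) = -R(\lambda)$, not $\hat f(\lambda)=R(\lambda)$; equivalently, from the $LU$ factorization one has $\det(\hat A-\lambda I)=-\hat f(\lambda)\prod_{i}(a_i-\lambda)$, and matching this to $(-1)^n\prod_k(\lambda-\hat\lambda_k)$ forces the extra sign. This sign propagates into the residue computation but cancels in the end, so the formulas for $\hat b_i^2$ and $\hat c$ are unaffected; just make the signs consistent throughout so the write-up reads cleanly.
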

Theorem~\ref{theorem:BoleyAndGolub} allows us to compute a new symmetric arrowhead matrix whose eigenvalues are known {\em exactly}. The modified matrix is nearby the original matrix in $2$-norm so long as $\hat{b}$ and $\hat{c}$ are close to $b$ and $c$, respectively:
\[
\norm{A-\hat{A}}_2 \le \norm{b-\hat{b}}_2 + \abs{c-\hat{c}}.
\]
Furthermore, each difference, each product, and each ratio in~\eqref{eq:spike} and each difference in~\eqref{eq:cap} may be computed to high relative accuracy. When the products are rewritten as exponentials of sums of logarithms, FMM acceleration is unlocked.

Finally, a $2$-normalized eigenvector of the nearby symmetric arrowhead matrix is given by:
\begin{equation}\label{eq:symarroweigenvector}
\hat{q}_i = \left(\dfrac{\hat{b}_1}{\hat{\lambda}_i-a_1},\ldots,\dfrac{\hat{b}_{n-1}}{\hat{\lambda}_i-a_{n-1}},1\right)^\top\Bigg/ \sqrt{1+\sum_{j=1}^{n-1}\dfrac{\hat{b}_j^2}{(\hat{\lambda}_i - a_j)^2}},
\end{equation}
another expression that is conveniently accelerated by the FMM. The ingenuity of this approach, by Gu and Eisenstat~\cite{Gu-Eisenstat-16-172-95}, is that the collection of all orthonormal eigenvectors is {\em numerically orthogonal} to the working precision, whereas the individual ratios in Eq.~\eqref{eq:symarroweigenvector} cannot be computed to high relative accuracy when the perturbed spike $\hat{b}$ is replaced with the original spike $b$.

\section{Harmonic polynomials based on bivariate analogues of Jacobi polynomials}

Jacobi polynomials are the orthogonal polynomials with respect to $L^2([-1,1], (1-x)^\alpha(1+x)^\beta\ud x)$ where $\alpha,\beta>-1$. They satisfy the symmetry relation~\cite[\S 22.1]{Abramowitz-Stegun-65}:
\begin{equation}\label{eq:Pnsym}
P_n^{(\alpha,\beta)}(-x) = (-1)^n P_n^{(\beta,\alpha)}(x),
\end{equation}
and the two recurrence relations~\cite[\S 22.1]{Abramowitz-Stegun-65}:
\begin{align}
(2n+\alpha+\beta+1)P_n^{(\alpha,\beta)}(x) & = (n+\alpha+\beta+1)P_n^{(\alpha+1,\beta)}(x) - (n+\beta)P_{n-1}^{(\alpha+1,\beta)}(x),\\
(2n+\alpha+\beta+2)(1-x)P_n^{(\alpha+1,\beta)}(x) & = 2(n+\alpha+1)P_n^{(\alpha,\beta)}(x) - 2(n+1)P_{n+1}^{(\alpha,\beta)}(x).
\end{align}
The orthonormality constants~\cite[\S 22.1]{Abramowitz-Stegun-65}:
\begin{equation}
\langle P_n^{(\alpha,\beta)}, P_n^{(\alpha,\beta)} \rangle = \dfrac{2^{\alpha+\beta+1}\Gamma(n+\alpha+1)\Gamma(n+\beta+1)}{(2n+\alpha+\beta+1)\Gamma(n+\alpha+\beta+1)n!}.
\end{equation}
allow us to define orthonormalized Jacobi polynomials:
\begin{equation}
\tilde{P}_n^{(\alpha,\beta)}(x) := \sqrt{\dfrac{(2n+\alpha+\beta+1)\Gamma(n+\alpha+\beta+1)n!}{2^{\alpha+\beta+1}\Gamma(n+\alpha+1)\Gamma(n+\beta+1)}}P_n^{(\alpha,\beta)}(x).
\end{equation}

Several classes of two-dimensional harmonic polynomials are based on generalizations of Jacobi polynomials~\cite{Koornwinder-435-75,Dunkl-Xu-14}. In particular, Jacobi polynomials generalize to the disk, the triangle, the rectangle, the deltoid, and recently the wedge and the boundary of the square~\cite{Olver-Xu-17}, among other shapes.

\subsection{The disk}

Let $\D^2 = \{(r,\theta)\in[0,1]\times[0,2\pi)\} \subset \R^2$ denote the unit disk, $r\in[0,1]$ the radius, and $\theta\in[0,2\pi)$ the polar angle measured counterclockwise from the positive $x$-axis. We consider orthogonal polynomials on the Hilbert space $L^2(\D^2, r\ud r\ud\theta)$. On the disk, any function $f\in L^2(\D^2, r\ud r\ud\theta)$ may be expanded in disk harmonics, or so-called Zernike polynomials~\cite{Zernike-1-689-34}:
\begin{equation}\label{eq:diskharmonicexpansion}
f(r,\theta) = \sum_{\ell=0}^{+\infty}\sum_{m=-\ell,2}^{+\ell} \dfrac{\langle Z_\ell^m, f\rangle}{\langle Z_\ell^m, Z_\ell^m\rangle} Z_\ell^m(r,\theta) = \sum_{m=-\infty}^{+\infty}\sum_{\ell=\abs{m},2}^{+\infty} \dfrac{\langle Z_\ell^m, f\rangle}{\langle Z_\ell^m, Z_\ell^m\rangle} Z_\ell^m(r,\theta),
\end{equation}
where the inner summation indices runs in steps of two.

The orthonormalized Zernike polynomials are bivariate analogues of Jacobi polynomials:
\begin{equation}
Z_\ell^m(r,\theta) = \dfrac{e^{\ii m\theta}}{\sqrt{2\pi}} \sqrt{2\ell+2} r^{\abs{m}} P_{\frac{\ell-\abs{m}}{2}}^{(0,\abs{m})}(2r^2-1),\qquad \ell\in\N_0,\quad m\in\{-\ell,-\ell+2,\ldots,\ell-2,\ell\}.
\end{equation}

Zernike polynomials have numerous applications in optics~\cite{Noll-66-207-76} and spectral methods for partial differential equations (PDEs) on the unit disk~\cite{Vasil-et-al-325-53-16}. Recently, the Fourier series of Zernike polynomials are derived in closed-form~\cite{Janssen-6-11028-1-11}. It is worth investigating whether or not fast transforms are producible based on the analytical expressions. Nevertheless, Zernike polynomials fall into Koornwinder's classification of bivariate analogues of Jacobi polynomials whose transforms may be accelerated in this framework.

\subsection{The triangle}

Without loss of generality, we work with a unit right triangle since polynomial structure and orthogonality are preserved under affine transforms~\cite{Olver-Xu-17}. Let $\T^2 = \{(x,y)\in\R^2 : 0\le x,y,x+y\le 1\}\subset\R^2$ denote the unit right triangle, let $w(x,y) = 2^{\alpha+2\beta+2\gamma+2}x^\alpha y^\beta (1-x-y)^\gamma$ and let $\ud\mu(x,y) = w(x,y) \ud x\ud y$. We consider orthogonal polynomials on the Hilbert space $L^2(\T^2, \ud\mu(x,y))$. The bivariate analogues of Jacobi polynomials are~\cite{Koornwinder-435-75,Dunkl-Xu-14}:
\begin{equation}
\tilde{P}_{\ell,m}^{(\alpha,\beta,\gamma)}(x,y) = (2(1-x))^m \tilde{P}_{\ell-m}^{(2m+\beta+\gamma+1,\alpha)}(2x-1) \tilde{P}_m^{(\gamma,\beta)}\left(\frac{2y}{1-x}-1\right),
\end{equation}
where the tilde implies that the univariate Jacobi polynomials are normalized such that:
\begin{equation}
\int_{\T^2} \tilde{P}_{\ell,m}^{(\alpha,\beta,\gamma)}(x,y)\tilde{P}_{\ell',m'}^{(\alpha,\beta,\gamma)}(x,y) w(x,y)\ud x\ud y = \delta_{\ell,\ell'}\delta_{m,m'}.
\end{equation}
On the triangle, any function $f\in L^2(\T^2,\ud\mu(x,y))$ may be expanded in triangular harmonics:
\[
f(x,y) = \sum_{\ell=0}^{+\infty}\sum_{m = 0}^\ell \langle \tilde{P}_{\ell,m}^{(\alpha,\beta,\gamma)}, f\rangle \tilde{P}_{\ell,m}^{(\alpha,\beta,\gamma)}(x,y).
\]
It is anticipated that triangular harmonics play a r\^ole in the spectral element method where a degree-$1$ refinement is taken in the geometry, and the PDEs are solved in infinite dimensions on each triangle.

\subsection{The Jacobi connection problem}

We begin by deriving a Givens rotation representation of the connection problem between neighbouring weighted Jacobi polynomials, similar in spirit to Theorem~\ref{theorem:SS}. Once weighted Jacobi expansions with high parameters are converted to equivalent representations with low parameters, a Chebyshev--Jacobi~\cite{Slevinsky-17} or Jacobi--Jacobi~\cite{Townsend-Webb-Olver-17} transform may be used to convert all expansions to a more convenient representation for rapid evaluation on a grid.

\begin{theorem}[Andrews, Askey, and Roy~\cite{Andrews-Askey-Roy-98}]
\begin{equation}\label{eq:Jacobisingleparamcnxn}
P_n^{(\gamma,\beta)}(x) = \dfrac{(\beta+1)_n}{(\alpha+\beta+2)_n} \sum_{k=0}^n \dfrac{(\gamma-\alpha)_{n-k}(\alpha+\beta+1)_k(\alpha+\beta+2k+1)(\beta+\gamma+n+1)_k}{(n-k)!(\beta+1)_k(\alpha+\beta+1)(\alpha+\beta+n+2)_k}P_k^{(\alpha,\beta)}(x).
\end{equation}
\end{theorem}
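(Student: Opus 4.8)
The plan is to prove the connection formula~\eqref{eq:Jacobisingleparamcnxn} by the standard route of expanding $P_n^{(\gamma,\beta)}$ in the $P_k^{(\alpha,\beta)}$ basis and computing the coefficients $c_{k,n}$ via the inner product. Since both families share the weight factor $(1+x)^\beta$ on $[-1,1]$ but differ in the first parameter ($\gamma$ versus $\alpha$), the coefficient is
\[
c_{k,n} = \frac{\langle P_k^{(\alpha,\beta)}, P_n^{(\gamma,\beta)}\rangle_{(\alpha,\beta)}}{\langle P_k^{(\alpha,\beta)}, P_k^{(\alpha,\beta)}\rangle_{(\alpha,\beta)}} = \frac{1}{h_k^{(\alpha,\beta)}}\int_{-1}^1 P_k^{(\alpha,\beta)}(x)\,P_n^{(\gamma,\beta)}(x)\,(1-x)^\alpha(1+x)^\beta\ud x,
\]
with $h_k^{(\alpha,\beta)}$ the orthonormality constant recorded earlier in the excerpt. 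The first step is to reduce this to a known integral. The cleanest tool is the Rodrigues-type representation for $P_k^{(\alpha,\beta)}$, i.e. $P_k^{(\alpha,\beta)}(x)(1-x)^\alpha(1+x)^\beta = \tfrac{(-1)^k}{2^k k!}\tfrac{\ud^k}{\ud x^k}\big[(1-x)^{k+\alpha}(1+x)^{k+\beta}\big]$; substituting this and integrating by parts $k$ times moves the $k$ derivatives onto $P_n^{(\gamma,\beta)}$, and the boundary terms vanish because of the $(1-x)^{k+\alpha}(1+x)^{k+\beta}$ factor (using $\alpha,\beta>-1$). This yields
\[
c_{k,n} = \frac{1}{2^k k!\,h_k^{(\alpha,\beta)}}\int_{-1}^1 (1-x)^{k+\alpha}(1+x)^{k+\beta}\,\frac{\ud^k}{\ud x^k}P_n^{(\gamma,\beta)}(x)\,\ud x.
\]

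The second step is to evaluate the derivative: $\tfrac{\ud^k}{\ud x^k}P_n^{(\gamma,\beta)}(x) = 2^{-k}(n+\gamma+\beta+1)_k\,P_{n-k}^{(\gamma+k,\beta+k)}(x)$, a classical differentiation identity for Jacobi polynomials. Inserting this, the remaining integral is
\[
\int_{-1}^1 (1-x)^{\alpha+k}(1+x)^{\beta+k}\,P_{n-k}^{(\gamma+k,\beta+k)}(x)\,\ud x,
\]
which is a single-integral-against-a-shifted-weight evaluation. One recognizes this as a $\pFq{2}{1}$-type integral: expand $P_{n-k}^{(\gamma+k,\beta+k)}$ via its hypergeometric (or explicit power-series) representation and integrate term-by-term against the Beta kernel $(1-x)^{\alpha+k}(1+x)^{\beta+k}$, obtaining a terminating $\pFq{3}{2}$ (or directly a $\pFq{2}{1}$ after a contiguous reduction) evaluated at $1$. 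Gauss's summation theorem (or a Saalsch\"utz/Pfaff–Saalsch\"utz identity, depending on which representation is used) then collapses the sum to a ratio of Pochhammer symbols. Alternatively, and perhaps more slickly, one can quote the known closed form for $\int_{-1}^1 (1-x)^{\rho}(1+x)^{\sigma}P_m^{(\rho',\sigma')}(x)\ud x$ from the standard tables and specialize.

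The final step is bookkeeping: collecting $h_k^{(\alpha,\beta)}$, the factor $2^{-k}k!^{-1}$, the Pochhammer $(n+\gamma+\beta+1)_k$, and the evaluated integral, then repeatedly applying the identities $(a)_n = \Gamma(a+n)/\Gamma(a)$, $(a)_n/(a)_k = (a+k)_{n-k}$, and the duplication-free reindexing $(n-k)! = n!/(n)_{-k}$-type manipulations to force everything into the shape displayed in~\eqref{eq:Jacobisingleparamcnxn}. I expect the genuine mathematical content to be concentrated in the single hypergeometric evaluation at unit argument; everything before it is routine reduction and everything after it is Pochhammer algebra. The main obstacle, then, is matching the particular grouping of Pochhammer factors in the published formula — in practice one should verify the identity at small $n$ (say $n=1,2$ with generic $\alpha,\beta,\gamma$) to pin down any stray sign or normalization before committing to the general simplification, since the literature normalizes Jacobi polynomials inconsistently and the factor $(\alpha+\beta+2k+1)/(\alpha+\beta+1)$ in the statement is the tell-tale sign of the ratio $h_0^{(\alpha,\beta)}/h_k^{(\alpha,\beta)}$ having been partially absorbed.
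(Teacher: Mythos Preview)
The paper does not supply its own proof of this statement: it is quoted as a known result from Andrews, Askey, and Roy and then invoked in the proof of Theorem~\ref{theorem:Jacobi}. There is therefore nothing in the paper to compare your argument against.

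Your sketch is the standard derivation and is correct in outline. The Rodrigues representation plus $k$-fold integration by parts, followed by the differentiation identity $\tfrac{\ud^k}{\ud x^k}P_n^{(\gamma,\beta)} = 2^{-k}(n+\gamma+\beta+1)_k P_{n-k}^{(\gamma+k,\beta+k)}$, does reduce the coefficient to an integral of a single Jacobi polynomial against a shifted Beta weight, and that integral is indeed a terminating ${}_2F_1$ at unit argument summable by Gauss. Your diagnosis that the mathematical content sits entirely in that hypergeometric evaluation, with everything else being Pochhammer bookkeeping, is accurate. If you want to match the cited source more closely, Andrews--Askey--Roy (Theorem~7.1.3) instead starts from the hypergeometric representation of $P_n^{(\gamma,\beta)}$ and applies a connection formula between ${}_2F_1$'s directly; but the Rodrigues route you propose is equivalent and arguably cleaner for the Pochhammer accounting you flag at the end.
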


\begin{definition}
Let $G_n$ denote the Givens rotation:
\[
G_n = \begin{pmatrix}
1 & \cdots & 0 & 0 & \cdots & 0\\
\vdots & \ddots & \vdots & \vdots & & \vdots\\
0 & \cdots & c_n & s_n & \cdots & 0\\
0 & \cdots & -s_n & c_n & \cdots & 0\\
\vdots & & \vdots & \vdots & \ddots & \vdots\\
0 & \cdots & 0 & 0 & \cdots & 1\\
\end{pmatrix},
\]
where the sines and the cosines are in the intersections of the $n^{\rm th}$ and $n+1^{\rm st}$ rows and columns, embedded in the identity of a conformable size.
\end{definition}

\begin{theorem}\label{theorem:Jacobi}
The connection coefficients between $(1-x)\tilde{P}_n^{(\alpha+2,\beta)}(x)$ and $\tilde{P}_{\ell}^{(\alpha,\beta)}(x)$ are:
\begin{equation}\label{eq:Jacobicoefficients}
c_{\ell,n}^{(\alpha,\beta)} = \left\{\begin{array}{ccc} (\alpha+1)\sqrt{\dfrac{\dfrac{(2\ell+\alpha+\beta+1)\Gamma(\ell+\alpha+\beta+1)\Gamma(\ell+\alpha+1)}{\Gamma(\ell+\beta+1)\Gamma(\ell+1)}}{\dfrac{\Gamma(n+\alpha+\beta+3)\Gamma(n+\alpha+3)}{(2n+\alpha+\beta+3)\Gamma(n+\beta+1)\Gamma(n+1)}}}, & \for & \ell \le n,\\
-\sqrt{\dfrac{(n+1)(n+\beta+1)}{(n+\alpha+2)(n+\alpha+\beta+2)}}, & \for & \ell = n+1,\\
0, & & otherwise.
\end{array} \right.
\end{equation}
Furthermore, the matrix of connection coefficients $C^{(\alpha,\beta)} \in \R^{(n+2)\times (n+1)}$ may be represented via the product of $n$ Givens rotations:
\[
C^{(\alpha,\beta)} = G_0^{(\alpha,\beta)}G_1^{(\alpha,\beta)}\cdots G_{n-2}^{(\alpha,\beta)}G_{n-1}^{(\alpha,\beta)} I_{(n+2)\times (n+1)},
\]
where the sines and cosines for the Givens rotations are given by:
\begin{equation}\label{eq:GRJcoefficients}
s_n^{(\alpha,\beta)} = \sqrt{\dfrac{(n+1)(n+\beta+1)}{(n+\alpha+2)(n+\alpha+\beta+2)}},\quad and\quad c_n^{(\alpha,\beta)} = \sqrt{\dfrac{(\alpha+1)(2n+\alpha+\beta+3)}{(n+\alpha+2)(n+\alpha+\beta+2)}}.
\end{equation}
\end{theorem}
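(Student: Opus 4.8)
The plan is to establish the two assertions in turn: first the closed form~\eqref{eq:Jacobicoefficients} for the coefficients $c_{\ell,n}^{(\alpha,\beta)}$, and then the Givens factorization, which once~\eqref{eq:Jacobicoefficients} and~\eqref{eq:GRJcoefficients} are both in hand reduces to a purely algebraic verification that a product of plane rotations applied to a rectangular identity reproduces those coefficients.

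For the closed form, I would begin from the observation that, since $\tilde{P}_\ell^{(\alpha,\beta)}$ is orthonormal with respect to $(1-x)^\alpha(1+x)^\beta\,\ud x$, the number $c_{\ell,n}^{(\alpha,\beta)}$ is simply the $\ell^{\rm th}$ coordinate of the degree-$(n+1)$ polynomial $(1-x)\tilde{P}_n^{(\alpha+2,\beta)}$ in that orthonormal basis. To compute it I would pass through the intermediate family $\{P_k^{(\alpha+1,\beta)}\}$: applying the Andrews--Askey--Roy identity~\eqref{eq:Jacobisingleparamcnxn} with target first parameter $\alpha+1$ (so that the parameter gap equals $1$) makes $(\gamma-\alpha)_{n-k}$ collapse to $(n-k)!$ and makes the Pochhammer $(\beta+\gamma+n+1)_k$ in the numerator cancel the one in the denominator, leaving a one-line expression for $P_n^{(\alpha+2,\beta)}$ in the $(\alpha+1,\beta)$ basis. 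Multiplying through by $1-x$ and invoking the second of the two displayed recurrence relations, which expresses $(1-x)P_k^{(\alpha+1,\beta)}$ as a two-term combination of $P_k^{(\alpha,\beta)}$ and $P_{k+1}^{(\alpha,\beta)}$, the coefficient of $P_\ell^{(\alpha,\beta)}$ becomes a difference of two consecutive intermediate coefficients. The key simplification is that this difference telescopes cleanly: after factoring out the common Pochhammer ratio, the remaining bracket collapses to $(\alpha+1)(2\ell+\alpha+\beta+1)$ for $0\le\ell\le n$, and to $-2(n+1)/(n+\alpha+\beta+2)$ at $\ell=n+1$. Converting the surviving Pochhammers to Gamma functions and rescaling by the normalizing factors in the definition of the orthonormalized Jacobi polynomials then yields~\eqref{eq:Jacobicoefficients}; in particular the $\ell=n+1$ entry simplifies to $-\sqrt{(n+1)(n+\beta+1)/\bigl((n+\alpha+2)(n+\alpha+\beta+2)\bigr)}=-s_n^{(\alpha,\beta)}$.

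For the factorization, I would multiply out $G_0^{(\alpha,\beta)},\dots,G_j^{(\alpha,\beta)}$ applied in order to the $j^{\rm th}$ column $e_j$ of the rectangular identity. Because each $G_k^{(\alpha,\beta)}$ acts only in coordinates $k$ and $k+1$, the rotation $G_j^{(\alpha,\beta)}$ plants a $-s_j^{(\alpha,\beta)}$ in coordinate $j+1$ and a $c_j^{(\alpha,\beta)}$ in coordinate $j$, and the successive rotations $G_{j-1}^{(\alpha,\beta)},\dots,G_0^{(\alpha,\beta)}$ cascade the latter upward, so that the $j^{\rm th}$ column becomes $\bigl(c_{0,j}^{(\alpha,\beta)},\dots,c_{j,j}^{(\alpha,\beta)},-s_j^{(\alpha,\beta)},0,\dots\bigr)^{\top}$ with $c_{\ell,j}^{(\alpha,\beta)}=c_j^{(\alpha,\beta)}\,c_{\ell-1}^{(\alpha,\beta)}\prod_{k=\ell}^{j-1}s_k^{(\alpha,\beta)}$ for $0\le\ell\le j$ (here the singly-indexed $c_j^{(\alpha,\beta)},c_{\ell-1}^{(\alpha,\beta)}$ are the rotation cosines of~\eqref{eq:GRJcoefficients}, with the conventions $c_{-1}^{(\alpha,\beta)}:=1$ and an empty product equal to $1$). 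It then remains to check that this matches~\eqref{eq:Jacobicoefficients}: substituting~\eqref{eq:GRJcoefficients}, the radicand of $\prod_{k=\ell}^{j-1}s_k^{(\alpha,\beta)}$ telescopes, the linear factors $k+1$, $k+\beta+1$, $k+\alpha+2$, $k+\alpha+\beta+2$ assembling into ratios of factorials and Gamma functions between $\ell$ and $j$, and the two cosine factors then absorb the leftover single factors $(\ell+\alpha+1)$, $(\ell+\alpha+\beta+1)$, $(j+\alpha+2)$, $(j+\alpha+\beta+2)$ into those Gamma functions, reproducing~\eqref{eq:Jacobicoefficients} exactly. A useful consistency check throughout is that $(1-x)\tilde{P}_n^{(\alpha+2,\beta)}$ is itself orthonormal with respect to $(1-x)^\alpha(1+x)^\beta\,\ud x$, which forces $C^{(\alpha,\beta)}$ to have orthonormal columns --- automatic for a product of plane rotations times a rectangular identity.

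I expect the principal difficulty to be purely one of bookkeeping rather than of ideas: getting the specialization of~\eqref{eq:Jacobisingleparamcnxn} and the two recurrence relations shifted correctly (it is easy to misplace the $\alpha\mapsto\alpha+1$), tracking the normalizing factors --- including the powers of $2$ --- through the rescaling, and handling the boundary cases $\ell=0$, $\ell=n$ (empty products, the convention $c_{-1}^{(\alpha,\beta)}:=1$) together with the sign at $\ell=n+1$. The telescoping that produces~\eqref{eq:Jacobicoefficients} hinges on two near-cancellations --- $\alpha+\beta+2\ell+2$ against $2\ell+\alpha+\beta+2$ in the recurrence step, and the emergence of $(\alpha+1)(2\ell+\alpha+\beta+1)$ from a difference of products --- so the calculation, while routine, must be carried out with care.
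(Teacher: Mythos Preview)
Your proposal is correct and uses the same two ingredients as the paper --- the single-parameter connection formula~\eqref{eq:Jacobisingleparamcnxn} specialized to a gap of one, and the $(1-x)$ decrement recurrence --- only in the reverse order: you expand $P_n^{(\alpha+2,\beta)}$ in the $(\alpha+1,\beta)$ basis and then lower by $(1-x)$, whereas the paper first applies the decrement $(1-x)P_n^{(\alpha+2,\beta)}=\tfrac{2(n+\alpha+2)}{2n+\alpha+\beta+3}P_n^{(\alpha+1,\beta)}-\tfrac{2(n+1)}{2n+\alpha+\beta+3}P_{n+1}^{(\alpha+1,\beta)}$ and then invokes~\eqref{eq:Jacobisingleparamcnxn} on each term. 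Either order produces the same telescoping, and your observation that the bracket collapses to $(\alpha+1)(2\ell+\alpha+\beta+1)$ is exactly right.

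For the Givens factorization you propose a direct forward verification --- multiplying out $G_0\cdots G_j e_j$ and matching the resulting products of sines and cosines against~\eqref{eq:Jacobicoefficients}. The paper instead argues in reverse: since the columns of $C^{(\alpha,\beta)}$ are orthonormal (as you note), applying $G_0^\top$ with $s_0=-c_{1,0}^{(\alpha,\beta)}$ zeroes the $(1,0)$ entry and, by orthonormality, the entire first row past the diagonal; iterating yields the rectangular identity. The paper's argument is shorter because orthonormality does the work, but your explicit product formula $c_{\ell,j}^{(\alpha,\beta)}=c_j^{(\alpha,\beta)}c_{\ell-1}^{(\alpha,\beta)}\prod_{k=\ell}^{j-1}s_k^{(\alpha,\beta)}$ is correct and the telescoping you describe does recover~\eqref{eq:Jacobicoefficients}, so the approach is equally valid.
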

\begin{proof}
For a clear exposition, an analogous result in terms of unnormalized Jacobi polynomials will be derived first. This is justified by the relation between normalized and unnormalized connection coefficients:
\[
c_{\ell,n}^{(\alpha,\beta)} = \langle \tilde{P}_\ell^{(\alpha,\beta)}, (1-x)\tilde{P}_n^{(\alpha+2,\beta)} \rangle = \sqrt{\dfrac{\langle P_\ell^{(\alpha,\beta)}, P_\ell^{(\alpha,\beta)}\rangle}{\langle (1-x)P_n^{(\alpha+2,\beta)}, (1-x)P_n^{(\alpha+2,\beta)}\rangle}}\dfrac{\langle P_\ell^{(\alpha,\beta)}, (1-x)P_n^{(\alpha+2,\beta)}\rangle}{\langle P_\ell^{(\alpha,\beta)}, P_\ell^{(\alpha,\beta)}\rangle}.
\]
Using the decrement operator:
\[
(1-x)P_n^{(\alpha+2,\beta)}(x) = \dfrac{2(n+\alpha+2)}{2n+\alpha+\beta+3}P_n^{(\alpha+1,\beta)}(x) - \dfrac{2(n+1)}{2n+\alpha+\beta+3}P_{n+1}^{(\alpha+1,\beta)}(x),
\]
it will suffice it to consider the inner products $\langle P_\ell^{(\alpha,\beta)}, P_n^{(\alpha+1,\beta)} \rangle$. Expanding $P_n^{(\alpha+1,\beta)}$ in the basis of $P_k^{(\alpha,\beta)}$ via Eq.~\eqref{eq:Jacobisingleparamcnxn}, we may conveniently express the inner products as:
\[
\langle P_\ell^{(\alpha,\beta)}, P_n^{(\alpha+1,\beta)} \rangle = \dfrac{(\beta+1)_n}{(\alpha+\beta+2)_n} \dfrac{(\alpha+\beta+1)_\ell(2\ell+\alpha+\beta+1)}{(\beta+1)_\ell(\alpha+\beta+1)} \langle P_\ell^{(\alpha,\beta)}, P_\ell^{(\alpha,\beta)} \rangle.
\]
Thus, for $\ell\le n$, and simplifying using properties of the Pochhammer symbol~\cite[\S 6.1]{Abramowitz-Stegun-65}:
\[
\dfrac{\langle P_\ell^{(\alpha,\beta)}, (1-x)P_n^{(\alpha+2,\beta)} \rangle}{\langle P_\ell^{(\alpha,\beta)}, P_\ell^{(\alpha,\beta)} \rangle} = \dfrac{2(\alpha+1)(\beta+1)_n}{(n+\alpha+\beta+2)(\alpha+\beta+2)_n} \dfrac{(\alpha+\beta+1)_\ell(2\ell+\alpha+\beta+1)}{(\beta+1)_\ell(\alpha+\beta+1)}.
\]
For $\ell = n+1$, 
\[
\langle P_{n+1}^{(\alpha,\beta)}, (1-x)P_n^{(\alpha+2,\beta)} \rangle = \dfrac{2(n+\alpha+2)}{2n+\alpha+\beta+3}\langle P_{n+1}^{(\alpha,\beta)}, P_n^{(\alpha+1,\beta)}\rangle - \dfrac{2(n+1)}{2n+\alpha+\beta+3}\langle P_{n+1}^{(\alpha,\beta)}, P_{n+1}^{(\alpha+1,\beta)}\rangle,
\]
but the first inner product is zero since $\deg(P_n^{(\alpha+1,\beta)}) = n$.
\begin{align*}
\dfrac{\langle P_{n+1}^{(\alpha,\beta)}, (1-x)P_n^{(\alpha+2,\beta)} \rangle}{\langle P_{n+1}^{(\alpha,\beta)}, P_{n+1}^{(\alpha,\beta)} \rangle} & = -\dfrac{2(n+1)}{2n+\alpha+\beta+3}\dfrac{(\beta+1)_{n+1}}{(\alpha+\beta+2)_{n+1}} \dfrac{(\alpha+\beta+1)_{n+1}(2n+\alpha+\beta+3)}{(\beta+1)_{n+1}(\alpha+\beta+1)},\\
& = -\dfrac{2(n+1)}{n+\alpha+\beta+2}.
\end{align*}
Eq.~\eqref{eq:Jacobicoefficients} is then a restatement of the results in terms of orthonormalized Jacobi polynomials.

To determine the Givens rotations, start by applying a Givens rotation from the left to introduce a zero in the second row of the first column. Since the columns of $C^{(\alpha,\beta)}$ are orthonormal, $s_0^{(\alpha,\beta)} = -c_{1,0}^{(\alpha,\beta)}$. Apply another Givens rotation from the left to introduce a zero in the third row of the second column of the conversion matrix. Again, we find that $s_1^{(\alpha,\beta)} = -c_{2,1}^{(\alpha,\beta)}$. Due to the orthonormality, the first rotation introduces zeros in every entry of the first row but the first. Similarly, the second rotation introduces zeros in every entry of the second row but the second. Continuing with $n-2$ more Givens rotations, we arrive at $I_{(n+2)\times(n+1)}$.
\end{proof}

\begin{remark}
A part of Theorem~\ref{theorem:Jacobi} is essentially proved by Olver and Xu~\cite[Lemma 3.1]{Olver-Xu-17} in a different context and for a different purpose. We note, however, that the interpretation of matrices of connection coefficients as generator-representable subdiagonal-plus-semiseparable matrices with orthonormal columns and the analytical representation of the Givens rotations is first described by Slevinsky~\cite[Theorem 2.4]{Slevinsky-ACHA-17}. The present case is recorded for normalized Jacobi polynomials.
\end{remark}

Due to the symmetry relation, a similar result is valid when the second parameter is decremented.

\begin{corollary}
The connection coefficients between $(1+x)\tilde{P}_n^{(\alpha,\beta+2)}(x)$ and $\tilde{P}_{\ell}^{(\alpha,\beta)}(x)$ are:
\begin{equation}
c_{\ell,n}^{(\alpha,\beta)} = \left\{\begin{array}{ccc} (-1)^{n-\ell}(\beta+1)\sqrt{\dfrac{\dfrac{(2\ell+\alpha+\beta+1)\Gamma(\ell+\alpha+\beta+1)\Gamma(\ell+\beta+1)}{\Gamma(\ell+\alpha+1)\Gamma(\ell+1)}}{\dfrac{\Gamma(n+\alpha+\beta+3)\Gamma(n+\beta+3)}{(2n+\alpha+\beta+3)\Gamma(n+\alpha+1)\Gamma(n+1)}}}, & \for & \ell \le n,\\
\sqrt{\dfrac{(n+1)(n+\alpha+1)}{(n+\beta+2)(n+\alpha+\beta+2)}}, & \for & \ell = n+1,\\
0, & & otherwise.
\end{array} \right.
\end{equation}
Furthermore, the matrix of connection coefficients has the same representation via the product of $n$ Givens rotations, where the sines and cosines for the Givens rotations are now given by:
\begin{equation}
s_n^{(\alpha,\beta)} = -\sqrt{\dfrac{(n+1)(n+\alpha+1)}{(n+\beta+2)(n+\alpha+\beta+2)}},\quad and\quad c_n^{(\alpha,\beta)} = \sqrt{\dfrac{(\beta+1)(2n+\alpha+\beta+3)}{(n+\beta+2)(n+\alpha+\beta+2)}}.
\end{equation}
\end{corollary}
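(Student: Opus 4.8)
The plan is to obtain the corollary from Theorem~\ref{theorem:Jacobi} by a single reflection $x\mapsto -x$ combined with the symmetry relation~\eqref{eq:Pnsym}, so that essentially no new computation is required. First I would upgrade~\eqref{eq:Pnsym} to the orthonormalized polynomials: since the normalization constant $\langle P_n^{(\alpha,\beta)},P_n^{(\alpha,\beta)}\rangle$ is invariant under $\alpha\leftrightarrow\beta$, one has $\tilde{P}_n^{(\alpha,\beta)}(-x)=(-1)^n\tilde{P}_n^{(\beta,\alpha)}(x)$. Then, taking the expansion $(1-x)\tilde{P}_n^{(\alpha+2,\beta)}(x)=\sum_\ell c_{\ell,n}^{(\alpha,\beta)}\tilde{P}_\ell^{(\alpha,\beta)}(x)$ guaranteed by Theorem~\ref{theorem:Jacobi}, I would replace $x$ by $-x$, apply the reflection formula to each factor, cancel a common $(-1)^n$, and relabel $\alpha\leftrightarrow\beta$. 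This produces $(1+x)\tilde{P}_n^{(\alpha,\beta+2)}(x)=\sum_\ell(-1)^{n-\ell}c_{\ell,n}^{(\beta,\alpha)}\tilde{P}_\ell^{(\alpha,\beta)}(x)$, and substituting the explicit values from~\eqref{eq:Jacobicoefficients} with the parameters transposed gives the stated coefficient formula; in particular the $\ell=n+1$ entry picks up the factor $(-1)^{(n+1)-n}=-1$ and changes sign.

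For the Givens factorization I would encode the alternating sign as a diagonal similarity. Writing $\tilde{C}^{(\alpha,\beta)}$ for the new $(n+2)\times(n+1)$ connection matrix, $C^{(\beta,\alpha)}$ for the matrix of Theorem~\ref{theorem:Jacobi} with the parameters transposed, and $D=\diag(1,-1,1,-1,\ldots)$ (of whichever size is needed), the identity above reads $\tilde{C}^{(\alpha,\beta)}=D\,C^{(\beta,\alpha)}D$ after noting that $I_{(n+2)\times(n+1)}D=D\,I_{(n+2)\times(n+1)}$. Substituting the factorization $C^{(\beta,\alpha)}=G_0^{(\beta,\alpha)}\cdots G_{n-1}^{(\beta,\alpha)}I_{(n+2)\times(n+1)}$ and inserting $D^2=I$ between the rotations, it remains to observe that $DG_n^{(\beta,\alpha)}D$ is again a Givens rotation in the same two coordinates, with the cosine unchanged and the sine negated, because conjugating $\left(\begin{smallmatrix}c&s\\-s&c\end{smallmatrix}\right)$ by $\diag(1,-1)$ yields $\left(\begin{smallmatrix}c&-s\\s&c\end{smallmatrix}\right)$. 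Reading off $s_n^{(\beta,\alpha)}$ and $c_n^{(\beta,\alpha)}$ from~\eqref{eq:GRJcoefficients} with the parameters transposed then gives exactly the sines and cosines claimed. Equivalently, one could simply rerun the orthonormal-column elimination argument of Theorem~\ref{theorem:Jacobi}, now using $s_n^{(\alpha,\beta)}=-c_{n+1,n}^{(\alpha,\beta)}$ for the new subdiagonal.

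The only point that needs care is the sign bookkeeping: one must carry the $(-1)^{n-\ell}$ consistently through the reflection and the parameter swap and check that the corresponding row/column sign diagonal is exactly the conjugation that fixes the cosines while flipping the sines. Once that is pinned down, the remaining steps — the symmetry of the normalization constant, the Pochhammer rearrangements hidden inside~\eqref{eq:Jacobicoefficients}, and the telescoping of the rotations — are already supplied by Theorem~\ref{theorem:Jacobi} and require nothing new.
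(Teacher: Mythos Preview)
Your proposal is correct and follows exactly the route the paper intends: the paper introduces the corollary with the single sentence ``Due to the symmetry relation, a similar result is valid when the second parameter is decremented,'' and your argument is precisely the careful execution of that remark---reflection $x\mapsto -x$, the parity relation for $\tilde P_n^{(\alpha,\beta)}$, parameter swap, and the diagonal conjugation $D G_n D$ to transport the Givens factorization.
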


\subsection{The weighted Jacobi differential equation}

It is well-known that Jacobi polynomials satisfy the second-order linear homogeneous differential equation~\cite[\S 22.6]{Abramowitz-Stegun-65}:
\begin{equation}\label{eq:SLP_JP}
-\dfrac{{\rm d}}{{\rm d}x}\left[(1-x)^{\alpha+1}(1+x)^{\beta+1}\dfrac{{\rm d}}{{\rm d}x}\tilde{P}_n^{(\alpha,\beta)}(x)\right] = n(n+\alpha+\beta+1)(1-x)^\alpha(1+x)^\beta\tilde{P}_n^{(\alpha,\beta)}(x).
\end{equation}
This differential equation appears in several other forms; however, not one of the forms is useful in considering the Sturm--Liouville problem for weighted Jacobi polynomials that are orthonormal functions in the Hilbert space $L^2([-1,1],\ud x)$:
\begin{equation}\label{eq:WNJP}
\hat{P}_n^{(\alpha,\beta)}(x) := (1-x)^{\frac{\alpha}{2}}(1+x)^{\frac{\beta}{2}}\tilde{P}_n^{(\alpha,\beta)}(x).
\end{equation}
We will now state the differential equation for the weighted normalized Jacobi polynomials.
\begin{theorem}
The weighted normalized Jacobi polynomials $\hat{P}_n^{(\alpha,\beta)}(x)$ are eigenfunctions of the linear differential equation:
\begin{align}\label{eq:SLP_WJP}
-(1-x^2)\dfrac{{\rm d}}{{\rm d}x}\left[(1-x^2)\dfrac{{\rm d}}{{\rm d}x}\hat{P}_n^{(\alpha,\beta)}(x)\right] + & \left((\tfrac{\alpha}{2})^2(1+x)^2+(\tfrac{\beta}{2})^2(1-x)^2-\tfrac{(\alpha\beta+\alpha+\beta)}{2}(1-x^2)\right)\hat{P}_n^{(\alpha,\beta)}(x)\nonumber\\
& = n(n+\alpha+\beta+1)(1-x^2)\hat{P}_n^{(\alpha,\beta)}(x).
\end{align}
\end{theorem}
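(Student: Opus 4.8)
\emph{Proof proposal.} The plan is to perform a Liouville-type change of dependent variable in the self-adjoint Jacobi equation~\eqref{eq:SLP_JP}, exactly as the associated Legendre equation~\eqref{eq:SLP_ALF} arises from the Legendre equation by absorbing the weight. Set $w(x) = (1-x)^{\alpha/2}(1+x)^{\beta/2}$, so that the definition~\eqref{eq:WNJP} reads $\hat P_n^{(\alpha,\beta)} = w\,\tilde P_n^{(\alpha,\beta)}$, and note that the leading coefficient and the weight in~\eqref{eq:SLP_JP} factor as $(1-x)^{\alpha+1}(1+x)^{\beta+1} = (1-x^2)\,w^2$ and $(1-x)^\alpha(1+x)^\beta = w^2$. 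I would substitute $\tilde P_n^{(\alpha,\beta)} = w^{-1}\hat P_n^{(\alpha,\beta)}$ into~\eqref{eq:SLP_JP} and divide by $w$; the key simplification is
\[
(1-x^2)\,w^2\,\frac{{\rm d}}{{\rm d}x}\!\left(w^{-1}\hat P_n^{(\alpha,\beta)}\right) = (1-x^2)\Big(w\,(\hat P_n^{(\alpha,\beta)})' - w'\,\hat P_n^{(\alpha,\beta)}\Big),
\]
so that differentiating once more and dividing by $w$ collapses all $w$-dependence into logarithmic derivatives:
\[
-\big((1-x^2)(\hat P_n^{(\alpha,\beta)})'\big)' + \frac{(1-x^2)'\,w' + (1-x^2)\,w''}{w}\,\hat P_n^{(\alpha,\beta)} = n(n+\alpha+\beta+1)\,\hat P_n^{(\alpha,\beta)}.
\]

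Multiplying this identity through by $1-x^2$ recasts the principal part in the doubly-weighted form $-(1-x^2)\frac{{\rm d}}{{\rm d}x}[(1-x^2)\frac{{\rm d}}{{\rm d}x}\hat P_n^{(\alpha,\beta)}]$ and produces the right-hand side $n(n+\alpha+\beta+1)(1-x^2)\hat P_n^{(\alpha,\beta)}$ of~\eqref{eq:SLP_WJP}, so the theorem reduces to the scalar identity
\[
(1-x^2)\,\frac{(1-x^2)'\,w' + (1-x^2)\,w''}{w} = (\tfrac{\alpha}{2})^2(1+x)^2 + (\tfrac{\beta}{2})^2(1-x)^2 - \tfrac{\alpha\beta+\alpha+\beta}{2}(1-x^2).
\]
To verify it I would compute $w'/w = -\tfrac{\alpha/2}{1-x} + \tfrac{\beta/2}{1+x}$ from $\log w$, then $w''/w = (w'/w)' + (w'/w)^2 = \tfrac{\alpha(\alpha-2)/4}{(1-x)^2} + \tfrac{\beta(\beta-2)/4}{(1+x)^2} - \tfrac{\alpha\beta/2}{1-x^2}$, insert $(1-x^2)' = -2x$, and clear the denominators $1-x$, $1+x$ against the factor $1-x^2 = (1-x)(1+x)$. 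Splitting $\tfrac{\alpha(\alpha-2)}{4} = (\tfrac{\alpha}{2})^2 - \tfrac{\alpha}{2}$ and likewise for $\beta$ isolates the target quadratic terms, reducing everything to the polynomial identity $\alpha x(1+x) - \beta x(1-x) - \tfrac{\alpha}{2}(1+x)^2 - \tfrac{\beta}{2}(1-x)^2 = -\tfrac{\alpha+\beta}{2}(1-x^2)$, which is confirmed by matching the coefficients of $1$, $x$, and $x^2$.

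The bookkeeping is routine calculus, so the main obstacle is simply keeping track of the superfluous factor $1-x^2$: equation~\eqref{eq:SLP_WJP} is stated in the non-standard Sturm--Liouville form with the prefactor $(1-x^2)$ outside the outer derivative (to match~\eqref{eq:SLP_ALF} and~\eqref{eq:SHT}), so one must carry out the $w$-substitution in genuinely self-adjoint form \emph{first} and only multiply by $1-x^2$ at the end, rather than trying to force the standard form prematurely. A secondary point of care is the sign and normalization in $w'/w$, since an error there contaminates every term of the potential; two cheap sanity checks are the case $\alpha=\beta=0$, where $w\equiv1$, the potential vanishes, and~\eqref{eq:SLP_WJP} collapses to the Legendre equation, and the case $\alpha=\beta=m$ with $n=\ell-m$, where $(\hat P$ is proportional to $\tilde P_\ell^m$ and~\eqref{eq:SLP_WJP} must reproduce~\eqref{eq:SLP_ALF} after using $n(n+\alpha+\beta+1)=\ell(\ell+1)-m(m+1)$.
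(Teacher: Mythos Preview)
Your proposal is correct and is exactly the approach the paper has in mind: the paper's proof consists of the single sentence ``Eq.~\eqref{eq:SLP_WJP} follows directly from Eqs.~\eqref{eq:SLP_JP} and~\eqref{eq:WNJP},'' and you have supplied precisely that substitution in full, including the verification of the potential term and the useful consistency checks against~\eqref{eq:SLP_ALF}. (There is a stray open parenthesis before $\hat P$ in your final sentence that you will want to remove before compiling.)
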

\begin{proof}
Eq.~\eqref{eq:SLP_WJP} follows directly from Eqs.~\eqref{eq:SLP_JP} and \eqref{eq:WNJP}.
\end{proof}
This Sturm--Liouville problem is the two parameter generalization of Eq.~\eqref{eq:SLP_ALF} for associated Legendre functions. When we view multiplication by $1\pm x$ as an operator acting on the basis of weighted normalized Jacobi polynomials, we immediately identify that certain terms in Eq.~\eqref{eq:SLP_WJP} are symmetric positive-definite and banded, in fact pentadiagonal.

If we expand the weighted normalized Jacobi polynomials of parameters $\gamma$ and $\delta$ in the basis of weighted normalized Jacobi polynomials of parameters $\alpha$ and $\beta$:
\[
\hat{P}_n^{(\gamma,\delta)}(x) = \sum_\ell c_{\ell,n}^{(\alpha,\beta,\gamma,\delta)} \hat{P}_\ell^{(\alpha,\beta)}(x),
\]
then we may rewrite the Sturm--Liouville problem in Eq.~\eqref{eq:SLP_WJP} symbolically:
\begin{equation}\label{eq:JHT}
\left(\MM \DD + \left[(\tfrac{\gamma}{2})^2-(\tfrac{\alpha}{2})^2\right]\MM^+ + \left[(\tfrac{\delta}{2})^2-(\tfrac{\beta}{2})^2\right]\MM^--\left(\tfrac{\gamma\delta+\gamma+\delta-\alpha\beta-\alpha-\beta}{2}\right)\MM\right)u = \lambda \MM u.
\end{equation}
Now, $\MM$ represents multiplication by $1-x^2$, $\MM^+$ is multiplication by $(1+x)^2$, $\MM^-$ is multiplication by $(1-x)^2$, and $\DD$ is the diagonal scaling of the basis by $n(n+\alpha+\beta+1)$ for $n\ge0$.

Due to the recurrence relation:
\begin{align*}
x\tilde{P}_n^{(\alpha,\beta)} & = 2\sqrt{\dfrac{(n+1)(n+\alpha+1)(n+\beta+1)(n+\alpha+\beta+1)}{(2n+\alpha+\beta+1)(2n+\alpha+\beta+2)^2(2n+\alpha+\beta+3)}}\tilde{P}_{n+1}^{(\alpha,\beta)}\\
& + \dfrac{(\beta^2-\alpha^2)}{(2n+\alpha+\beta)(2n+\alpha+\beta+2)}\tilde{P}_n^{(\alpha,\beta)} + 2\sqrt{\dfrac{n(n+\alpha)(n+\beta)(n+\alpha+\beta)}{(2n+\alpha+\beta-1)(2n+\alpha+\beta)^2(2n+\alpha+\beta+1)}}\tilde{P}_{n-1}^{(\alpha,\beta)},
\end{align*}
all the multiplication operators may be derived analytically and algorithms for their component-wise computation to high relative accuracy are described in Appendix~\ref{appendix:JHT}.

If we let $\mathcal{S}$ denote the weighted sum of symmetric positive-definite multiplication operators on the left-hand side of Eq.~\eqref{eq:JHT}, we have the problem:
\[
\left( \MM\DD + \mathcal{S} \right)u = \lambda \MM u,
\]
where $\MM$ is symmetric positive-definite and banded, $\DD$ is diagonal, and $\mathcal{S}$ is symmetric and banded. If we use the same strategy as for spherical harmonics, we would take the Cholesky factorization of $\MM = \RR^\top\RR$ and rearrange to:
\begin{equation}\label{eq:SDBJHT}
\left( \RR\DD\RR^\top + \RR^{-\top}\mathcal{S}\RR^\top \right)v = \lambda \RR\RR^\top v.
\end{equation}
As before, $\RR\DD\RR^\top$ is symmetric and banded and $\RR\RR^\top$ is symmetric positive-definite and banded, but the new term $\RR^{-\top}\mathcal{S}\RR^\top$ is no longer proportional to the identity as occurs for spherical harmonics. In fact, not much of its structure is apparent by this formulation alone.

To reveal the structure of $\RR^{-\top}\mathcal{S}\RR^\top$, we make use of another property of our eigenproblem. Since $\MM$ and $\mathcal{S}$ are multiplication operators with the same separable Hilbert spaces attached to the domain and range, they commute:
\[
[\MM,\mathcal{S}] = 0.
\]
We use the commutator and the (formal) invertibility of $\MM$ to write $\mathcal{S} = \MM\mathcal{S}\MM^{-1}$, leading to the equivalent representation:
\[
\left( \RR\DD\RR^\top + \RR\mathcal{S}\RR^{-1} \right)v = \lambda \RR\RR^\top v.
\]
By the Cholesky factorization, $\RR$ is upper triangular and banded. Thus $\RR^{-\top}\mathcal{S}\RR^\top$ has nonzero entries up to and including the second superdiagonal, but no higher. Furthermore:
\[
\RR\mathcal{S}\RR^{-1} = \left(\RR^{-\top}\mathcal{S}^\top\RR^\top\right)^\top = \left(\RR^{-\top}\mathcal{S}\RR^\top\right)^\top = \RR^{-\top}\mathcal{S}\RR^\top.
\]
While $\RR^{-\top}\mathcal{S}\RR^\top$ in Eq.~\eqref{eq:SDBJHT} initially appears to be asymmetric and dense, it is in fact symmetric and banded.

\section{Discussion}

This work motivates the development and analysis of fast divide-and-conquer algorithms for the symmetric-definite banded generalized eigenvalue problem and the symmetric diagonal-plus-generator-representable eigenvalue problem. While the pre-computations are effectively conquered by the algorithms proposed above, it is anticipated that they may require extended precision arithmetic to ensure that high relative accuracy is guaranteed in the structured representations of the transforms, analogous to the requirements in~\cite{Rokhlin-Tygert-27-1903-06,Tygert-227-4260-08}. While symmetric arrowhead eigensolvers are backward stable~\cite{Borges-Gragg-11-93,Gu-Eisenstat-16-172-95}, providing exact solutions to symmetric arrowhead matrices nearby in $2$-norm, the quadratic spacing of the eigenvalues of the Sturm--Liouville problems dictates that such matrices may not be nearby at all in practice. The use of extended precision arithmetic would only scale the pre-computations by a constant factor that depends on the software or hardware implementation. It is also anticipated that eigensolvers may be facilitated by complete knowledge of the spectra of the differential equations.

The symmetrization of the banded eigenproblems while preserving the bandwidth comes at the expense of reformulation in terms of symmetric-definite generalized eigenvalue problems. The recovery of the orthonormal eigenfunctions now depends on the conditioning of the Cholesky factorization of the multiplication of $1-x^2$, which appears to be of order $\OO(n)$. While extended precision arithmetic should help alleviate some numerical difficulties imposed by the ill-conditioning, the difficulties cannot be eliminated completely. This is arguably the single most important factor for considering the symmetric diagonal-plus-generator-representable eigenvalue problem.

The software package {\tt FastTransforms.jl}~\cite{Slevinsky-GitHub-FastTransforms} implements the fast and backward stable transforms between spherical harmonic expansions and their bivariate Fourier series described in~\cite{Slevinsky-ACHA-17}. As an open source repository with the ability to create light wrappers of BLAS and LAPACK drivers and computational routines and to template in extended precision arithmetic, it is a natural home for the fast transforms described in this work.

\section*{Acknowledgments}

I acknowledge the generous support of the Natural Sciences and Engineering Research Council of Canada through a discovery grant (RGPIN-2017-05514).

\bibliography{/Users/Mikael/Bibliography/Mik}

\appendix

\section{The symmetric-definite banded spherical harmonic generalized eigenvalue problem}\label{appendix:SHT}

\begin{theorem}
\begin{enumerate}
\item The Cholesky factor $\RR$ is:
\[
\RR = \begin{pmatrix} c_1 & 0 & d_1\\ & c_2 & & d_2\\ & & c_3 & & d_3\\ & & & \ddots & & \ddots\\ & & & & c_n & & d_n\\ & & & & & \ddots & & \ddots\end{pmatrix},
\]
where:
\[
c_n = \sqrt{\dfrac{(n+2m)(n+2m+1)}{(2n+2m-1)(2n+2m+1)}},\quad{\rm and}\quad d_n = -\sqrt{\dfrac{n(n+1)}{(2n+2m+1)(2n+2m+3)}}.
\]
\item The product $\RR\RR^\top$ is also known in closed form:
\[
\RR\RR^\top = \begin{pmatrix} e_1 & 0 & f_1\\ 0 & e_2 & & f_2\\ f_1 & & e_3 & & f_3\\ & \ddots & & \ddots & & \ddots\\ & & f_{n-2} & & e_n & & f_n\\ & & & \ddots & & \ddots & & \ddots\end{pmatrix},
\]
where:
\[
e_n = \frac{2(2m^2+(2n+3)m+n(n+1))}{(2n+2m-1)(2n+2m+3)},\quad{\rm and}\quad f_n = -\sqrt{\frac{n(n+1)(n+2m+2)(n+2m+3)}{(2n+2m+1)(2n+2m+3)^2(2n+2m+5)}},
\]
\item and so is $\RR\DD\RR^\top$:
\[
\RR\DD\RR^\top = \begin{pmatrix} g_1 & 0 & h_1\\ 0 & g_2 & & h_2\\ h_1 & & g_3 & & h_3\\ & \ddots & & \ddots & & \ddots\\ & & h_{n-2} & & g_n & & h_n\\ & & & \ddots & & \ddots & & \ddots\end{pmatrix},
\]
where:
\[
g_n = \frac{4m^4+(12n+2)m^3+(14n^2+6n-6)m^2+(8n^3+8n^2-4n)m+2n(n+1)(n^2+n-1)}{(2n+2m-1)(2n+2m+3)},
\]
and:
\[
h_n = -(n+m+1)(n+m+2)\sqrt{\frac{n(n+1)(n+2m+2)(n+2m+3)}{(2n+2m+1)(2n+2m+3)^2(2n+2m+5)}}.
\]
\end{enumerate}
\end{theorem}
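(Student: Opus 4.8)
The plan is to obtain part~(1) from the explicit form of $\MM$ recorded in Section~3 together with the uniqueness of the Cholesky factorization, and then to read off parts~(2) and~(3) as direct matrix products that collapse because of the sparsity of $\RR$.

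For part~(1), the key observation is that $\MM$ is symmetric positive-definite and its only nonzero entries lie on the main diagonal ($a_n$) and the second sub/super-diagonal ($b_n$); equivalently, after the perfect shuffle separating even and odd indices, $\MM$ is block diagonal with symmetric positive-definite \emph{tridiagonal} blocks. The Cholesky factor of a tridiagonal SPD matrix is upper bidiagonal, so un-shuffling shows that the unique Cholesky factor $\RR$ with $\MM=\RR^\top\RR$ is upper triangular with nonzero entries only on the main diagonal, say $\RR_{n,n}=c_n$, and the second superdiagonal, say $\RR_{n,n+2}=d_n$. Equating entries of $\RR^\top\RR$ and $\MM$ then gives the recurrence
\[
c_n^2+d_{n-2}^2=a_n,\qquad c_n d_n=b_n,\qquad d_{-1}=d_0=0.
\]
Rather than solve this nonlinear recurrence cold, I would verify that the stated closed forms satisfy it: $c_n d_n=b_n$ drops out on multiplying and simplifying the two radicals, and $c_n^2+d_{n-2}^2=a_n$, after clearing the common denominator $(2n+2m-1)(2n+2m+1)(2n+2m+3)$, reduces to a polynomial identity in $n$ and $m$ that is checked by expansion. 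Positivity of the stated $c_n$ identifies $\RR$ as \emph{the} Cholesky factor, and the minus sign of $d_n$ is then forced by $d_n=b_n/c_n$ (consistent since $b_n<0$). Note that the stated formula for $d_k$ carries a factor $\sqrt{k(k+1)}$ and hence vanishes at $k=-1,0$, so the boundary of the recurrence is respected automatically and the first two indices need no separate treatment.

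For parts~(2) and~(3), since row $n$ of $\RR$ is supported on columns $n$ and $n+2$, the products $\RR\RR^\top$ and $\RR\DD\RR^\top$ are banded in the same step-$2$ pattern and their entries collapse to
\[
(\RR\RR^\top)_{n,n}=c_n^2+d_n^2,\qquad (\RR\RR^\top)_{n,n+2}=d_n\,c_{n+2},
\]
and, writing $\DD=\diag(\delta_n)$ with $\delta_n=(n+m-1)(n+m)$ — the diagonal scaling by $\ell(\ell+1)$ from Eq.~\eqref{eq:SHT}, the $n$th basis function being $\tilde P_{m+n-1}^m$ —
\[
(\RR\DD\RR^\top)_{n,n}=\delta_n c_n^2+\delta_{n+2}d_n^2,\qquad (\RR\DD\RR^\top)_{n,n+2}=\delta_{n+2}\,d_n\,c_{n+2}.
\]
Substituting the formulas from part~(1), the off-diagonal identities $f_n=d_n c_{n+2}$ and $h_n=(n+m+1)(n+m+2)\,f_n$ fall out by inspection, while $e_n=c_n^2+d_n^2$ and $g_n=\delta_n c_n^2+\delta_{n+2}d_n^2$ follow after combining two rational terms over the common denominator $(2n+2m-1)(2n+2m+1)(2n+2m+3)$ and observing that the resulting numerator carries a factor $(2n+2m+1)$ that cancels, leaving exactly the denominator $(2n+2m-1)(2n+2m+3)$ displayed; matching the reduced numerators is then a finite polynomial computation.

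The main obstacle is purely algebraic bookkeeping: the numerator of $g_n$ (equivalently $\delta_n c_n^2+\delta_{n+2}d_n^2$) is a quartic in $m$ with $n$-dependent coefficients, so its expansion is long; I would organize it by first cancelling the shared $(2n+2m+1)$ factor and then collecting powers of $m$, corroborating the final quartic with a symbolic-algebra check. One can also recognize $\RR$ as the matrix of the order-raising map $f\mapsto\sqrt{1-x^2}\,f$ from order-$m$ to order-$(m+1)$ associated Legendre expansions, with $\MM=\RR^\top\RR$ because multiplication by $\sqrt{1-x^2}$ is self-adjoint on $L^2([-1,1],\ud x)$ and carries the order-$m$ span into the order-$(m+1)$ span; reading $c_n$ and $d_n$ off the three-term relation linking consecutive orders gives an alternative derivation of part~(1) that sidesteps the nonlinear recurrence.
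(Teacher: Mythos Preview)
Your proposal is correct and follows essentially the same route as the paper: derive the Cholesky recurrence $c_n^2+d_{n-2}^2=a_n$, $d_n=b_n/c_n$ from $\RR^\top\RR=\MM$, verify the stated closed forms, and then read off $e_n,f_n,g_n,h_n$ from the two-term products $c_n^2+d_n^2$, $d_nc_{n+2}$, $\delta_nc_n^2+\delta_{n+2}d_n^2$, $\delta_{n+2}d_nc_{n+2}$ with $\delta_n=(m+n-1)(m+n)$. Your additions --- the even/odd shuffle argument justifying the sparsity pattern of $\RR$ a priori, and the interpretation of $\RR$ as the order-raising multiplication by $\sqrt{1-x^2}$ --- are nice embellishments but do not change the underlying argument.
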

\begin{proof}
\begin{enumerate}
\item This follows from the relations:
\begin{align*}
c_1 & = \sqrt{a_1},\\
d_1 & = \frac{b_1}{c_1},\\
c_2 & = \sqrt{a_2},\\
d_2 & = \frac{b_2}{c_2},\quad\hbox{and for}\quad n\ge 3,\\
c_n & = \sqrt{a_n-d_{n-2}^2},\\
d_n & = \frac{b_n}{c_n}.
\end{align*}
\item With $c_n$ and $d_n$, this follows from the relations:
\begin{align*}
e_n & = c_n^2+d_n^2,\\
f_n & = d_nc_{n+2}.
\end{align*}
\item Similarly, the requisite relations are:
\begin{align*}
g_n & = (m+n-1)(m+n)c_n^2+(m+n+1)(m+n+2)d_n^2,\\
h_n & = (m+n+1)(m+n+2)d_nc_{n+2},
\end{align*}
since the diagonal operator is:
\[
\DD = \begin{pmatrix} m(m+1)\\ & (m+1)(m+2)\\ & & (m+2)(m+3)\\ & & & \ddots\\& & & & (m+n-1)(m+n)\\& & & & & \ddots\end{pmatrix}.
\]
\end{enumerate}
\end{proof}

\section{The symmetric-definite banded weighted Jacobi generalized eigenvalue problem}\label{appendix:JHT}

\begin{theorem}
\begin{enumerate}
\item The multiplication of $1+x$ is:
\[
\MM_1 = \begin{pmatrix} a_1 & b_1\\ b_1 & a_2 & b_2\\ & b_2 & a_3 & b_3\\ & & \ddots & \ddots & \ddots\\ & & & b_{n-1} & a_n & b_n\\ & & & & \ddots & \ddots & \ddots\end{pmatrix},
\]
where:
\[
a_n = \dfrac{2n(2n-2)+(4n+2\beta-2)(\alpha+\beta)}{(2n+\alpha+\beta)(2n+\alpha+\beta-2)},\quad{\rm and}\quad b_n = 2\sqrt{\dfrac{n(n+\alpha)(n+\beta)(n+\alpha+\beta)}{(2n+\alpha+\beta-1)(2n+\alpha+\beta)^2(2n+\alpha+\beta+1)}}.
\]
\item The multiplication of $1-x$ is:
\[
\MM_2 = \begin{pmatrix} c_1 & d_1\\ d_1 & c_2 & d_2\\ & d_2 & c_3 & d_3\\ & & \ddots & \ddots & \ddots\\ & & & d_{n-1} & c_n & d_n\\ & & & & \ddots & \ddots & \ddots\end{pmatrix},
\]
where:
\[
c_n = \dfrac{2n(2n-2)+(4n+2\alpha-2)(\alpha+\beta)}{(2n+\alpha+\beta)(2n+\alpha+\beta-2)},\quad{\rm and}\quad d_n = -2\sqrt{\dfrac{n(n+\alpha)(n+\beta)(n+\alpha+\beta)}{(2n+\alpha+\beta-1)(2n+\alpha+\beta)^2(2n+\alpha+\beta+1)}}.
\]
\item Moreover, the multiplication operators $\MM = \MM_1\MM_2$, $\MM^+ = \MM_1^2$, and $\MM^- = \MM_2^2$ can all be computed to high component-wise relative accuracy by symmetric tridiagonal operator multiplication. Similarly, the multiplication operator $\mathcal{S}$ is the sum of three symmetric pentadiagonal operators.

\item  The entries of the Cholesky factor of $\MM = \RR^\top\RR$ satisfy the pentadiagonal Cholesky recurrence and the products $\RR\RR^\top$ and $\RR\DD\RR^\top$ are the products of banded matrices. The Cholesky factor is particularly nice:
\[
\RR = \begin{pmatrix} e_1 & f_1 & g_1\\ & e_2 & f_2 & g_2\\ & & e_3 & f_3 & g_3\\ & & & \ddots & \ddots & \ddots\\ & & & & e_n & f_n & g_n\\ & & & & & \ddots & \ddots & \ddots\end{pmatrix},
\]
where:
\[
e_n = 2\sqrt{\dfrac{(n+\alpha)(n+\beta)(n+\alpha+\beta)(n+\alpha+\beta+1)}{(2n+\alpha+\beta-1)(2n+\alpha+\beta)^2(2n+\alpha+\beta+1)}},
\]
and:
\[
f_n = \dfrac{2(\alpha-\beta)\sqrt{n(n+\alpha+\beta+1)}}{(2n+\alpha+\beta)(2n+\alpha+\beta+2)},
\]
and:
\[
g_n = -2\sqrt{\dfrac{n(n+1)(n+\alpha+1)(n+\beta+1)}{(2n+\alpha+\beta+1)(2n+\alpha+\beta+2)^2(2n+\alpha+\beta+3)}}.
\]
The inverse Cholesky factor is upper triangular and dense (but still rank-structured):
\[
\RR^{-1} = \begin{pmatrix} R_{1,1}^{-1} & R_{1,2}^{-1} & R_{1,3}^{-1} & \cdots\\ & R_{2,2}^{-1} & R_{2,3}^{-1} & \ddots\\ & & R_{3,3}^{-1} & \ddots\\ & & & \ddots
\end{pmatrix},
\]
and entries generically satisfy:
\[
e_i R_{i,i+j}^{-1} + f_i R_{i+1,i+j}^{-1} + g_i R_{i+2,i+j}^{-1} = \delta_{i,j},
\]
where $\delta_{i,j}$ is the Kronecker delta function~\cite[Chap.~24]{Abramowitz-Stegun-65}. In particular:
\[
R_{n,n}^{-1} = \frac{1}{2}\sqrt{\dfrac{(2n+\alpha+\beta-1)(2n+\alpha+\beta)^2(2n+\alpha+\beta+1)}{(n+\alpha)(n+\beta)(n+\alpha+\beta)(n+\alpha+\beta+1)}},
\]
and:
\[
R_{n,n+1}^{-1} = \dfrac{(\beta-\alpha)}{2}\sqrt{\dfrac{n(2n+\alpha+\beta-1)(2n+\alpha+\beta+1)^2(2n+\alpha+\beta+3)}{(n+\alpha)_2(n+\beta)_2(n+\alpha+\beta)_3}},
\]
and:
\begin{align*}
R_{n,n+2}^{-1} & = \dfrac{1}{8}\sqrt{\dfrac{(n)_2(2n+\alpha+\beta-1)(2n+\alpha+\beta+2)^2(2n+\alpha+\beta+5)}{(n+\alpha)_3(n+\beta)_3(n+\alpha+\beta)_4}}\\
& \times \Big((2n+\alpha+\beta)(2n+\alpha+\beta+4)+3(\alpha-\beta)^2\Big).
\end{align*}
\item Finally, to compute the product $\RR^{-\top}\mathcal{S}\RR^\top = \RR\mathcal{S}\RR^{-1}$, we rely on the fact that the result is symmetric and pentadiagonal. This implies that only the main diagonal and the first two superdiagonals of $\RR^{-1}$ are required.
\end{enumerate}
\end{theorem}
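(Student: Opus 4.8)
The plan is to treat the five assertions in turn, reducing parts~1--3 to the three-term recurrence for $x\tilde{P}_n^{(\alpha,\beta)}$ recorded above and parts~4--5 to the Jacobi contiguity relations together with the fact that $\MM$ and $\mathcal{S}$ are multiplication operators. For parts~1 and~2 the key observation is that multiplication by $1\pm x$ commutes with multiplication by the weight $(1-x)^{\alpha/2}(1+x)^{\beta/2}$, so in the orthonormal basis $\hat{P}_n^{(\alpha,\beta)}$ the operators ``multiply by $1+x$'' and ``multiply by $1-x$'' are represented by the same matrices as in the orthonormal Jacobi basis $\tilde{P}_n^{(\alpha,\beta)}$; thus, writing $X$ for the symmetric tridiagonal Jacobi matrix of the displayed recurrence, $\MM_1=\II+X$ and $\MM_2=\II-X$, and reading off $X$ and simplifying the diagonal over a common denominator (or passing from $\MM_1$ to $\MM_2$ through Eq.~\eqref{eq:Pnsym}, which conjugates $X$ by $\diag((-1)^n)$ and swaps $\alpha\leftrightarrow\beta$) yields $a_n,b_n$ and $c_n,d_n$. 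Part~3 is then immediate: multiplication by a product of polynomials is the product of multiplication operators, so $\MM=\MM_1\MM_2$, $\MM^+=\MM_1^2$, $\MM^-=\MM_2^2$, each a product of two symmetric tridiagonal matrices and hence symmetric pentadiagonal, while $\mathcal{S}$ is the fixed linear combination of $\MM^+,\MM^-,\MM$ read off from Eq.~\eqref{eq:JHT}. For the relative-accuracy claim I would expand the entries and note that the diagonals of $\MM^\pm$ are sums of squares while their superdiagonals factor as $b_n(a_n+a_{n+1})$ and $-b_nb_{n+1}$ (and the $c,d$ analogues); the only subtraction is the main diagonal of $\MM$, equal to $1-x_{n-1}^2-b_{n-1}^2-b_n^2$, which is harmless because $b_n^2\to\tfrac14$ keeps the entry $\Theta(1)$ while the cancelled terms are $O(1)$.

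For part~4, the idea I would use --- rather than verifying the pentadiagonal Cholesky recurrence directly --- is to build $\RR$ from the finite expansion
\[
(1-x^2)\tilde{P}_n^{(\alpha+1,\beta+1)}(x)=\rho_n\tilde{P}_n^{(\alpha,\beta)}(x)+\sigma_n\tilde{P}_{n+1}^{(\alpha,\beta)}(x)+\tau_n\tilde{P}_{n+2}^{(\alpha,\beta)}(x),
\]
which terminates because, with $\langle\cdot,\cdot\rangle_{(a,b)}$ denoting the inner product of weight $(1-x)^a(1+x)^b$, one has $\langle\tilde{P}_k^{(\alpha,\beta)},(1-x^2)\tilde{P}_n^{(\alpha+1,\beta+1)}\rangle_{(\alpha,\beta)}=\langle\tilde{P}_k^{(\alpha,\beta)},\tilde{P}_n^{(\alpha+1,\beta+1)}\rangle_{(\alpha+1,\beta+1)}=0$ for $k<n$. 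Assembling the lower-triangular bandwidth-two matrix $A$ with $A_{n,n}=\rho_n$, $A_{n+1,n}=\sigma_n$, $A_{n+2,n}=\tau_n$, Parseval in $L^2([-1,1],(1-x)^{\alpha+1}(1+x)^{\beta+1}\ud x)$ gives
\begin{align*}
[AA^\top]_{k,l} &=\sum_n\langle\tilde{P}_k^{(\alpha,\beta)},\tilde{P}_n^{(\alpha+1,\beta+1)}\rangle_{(\alpha+1,\beta+1)}\,\langle\tilde{P}_l^{(\alpha,\beta)},\tilde{P}_n^{(\alpha+1,\beta+1)}\rangle_{(\alpha+1,\beta+1)}\\
&=\langle\tilde{P}_k^{(\alpha,\beta)},(1-x^2)\tilde{P}_l^{(\alpha,\beta)}\rangle_{(\alpha,\beta)}=[\MM]_{k,l}.
\end{align*}
Since $A$ is lower triangular with positive diagonal, uniqueness of the Cholesky factorization forces $\RR=A^\top$, so its entries automatically satisfy the pentadiagonal Cholesky recurrence and $e_n,f_n,g_n$ equal $\rho,\sigma,\tau$ at the matching degree. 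To obtain closed forms I would multiply $P_n^{(\alpha+1,\beta+1)}$ by $1-x^2=(1-x)(1+x)$, first applying the contiguity relation that lowers the second parameter while multiplying by $1+x$, then the one that lowers the first parameter while multiplying by $1-x$ to each of the two resulting terms, collecting the coefficients of $P_n^{(\alpha,\beta)},P_{n+1}^{(\alpha,\beta)},P_{n+2}^{(\alpha,\beta)}$ and multiplying through by ratios of orthonormality constants; the products telescope to the stated $e_n,f_n,g_n$. Then $\RR^{-1}$ is the inverse of a banded upper-triangular matrix, hence upper triangular, generically dense, and rank-structured, while $\RR\RR^\top$ and $\RR\DD\RR^\top$ are products of banded matrices; reading $\RR\RR^{-1}=\II$ along each row gives the three-term recurrence $e_iR_{i,i+j}^{-1}+f_iR_{i+1,i+j}^{-1}+g_iR_{i+2,i+j}^{-1}=\delta_{j,0}$ in the row index, and solving its first three instances down a column yields $R_{n,n}^{-1}=1/e_n$, $R_{n,n+1}^{-1}=-f_n/(e_ne_{n+1})$, $R_{n,n+2}^{-1}=(f_nf_{n+1}-g_ne_{n+1})/(e_ne_{n+1}e_{n+2})$, which simplify to the claimed formulae. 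Specialising to $\alpha=\beta$ annihilates $f_n$ and recovers the $\RR$ of Appendix~\ref{appendix:SHT}, a useful consistency check.

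For part~5 I would use that $\MM$ and $\mathcal{S}$ commute, both being multiplication by polynomials in $x$: with the formal invertibility of $\MM$ and $\MM=\RR^\top\RR$, $\mathcal{S}=\MM\mathcal{S}\MM^{-1}=\RR^\top\RR\,\mathcal{S}\,\RR^{-1}\RR^{-\top}$, hence $\RR^{-\top}\mathcal{S}\RR^\top=\RR\mathcal{S}\RR^{-1}$; transposing and using $\mathcal{S}=\mathcal{S}^\top$ shows this is symmetric, and since $\RR$ has upper bandwidth two and $\mathcal{S}$ bandwidth two it also has lower bandwidth two, so it is symmetric pentadiagonal. Forming it as $\RR^{-\top}(\mathcal{S}\RR^\top)$ with the banded factor $\mathcal{S}\RR^\top$ built first, and computing only its main diagonal and first two superdiagonals (which by symmetry determine the rest), one checks that the sole entries of $\RR^{-1}$ that enter are $R_{n,n}^{-1},R_{n,n+1}^{-1},R_{n,n+2}^{-1}$. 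The conceptual steps above are short; the part where I expect to spend the most effort is the symbolic work in part~4 --- iterating the two contiguity relations and clearing the Gamma-function ratios so that everything collapses exactly onto the stated $e_n,f_n,g_n$, and in particular simplifying $R_{n,n+2}^{-1}$ down to the factor $(2n+\alpha+\beta)(2n+\alpha+\beta+4)+3(\alpha-\beta)^2$. These are lengthy but routine rational-function identities in $n,\alpha,\beta$, best carried out and double-checked with a computer algebra system.
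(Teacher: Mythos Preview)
The paper states this theorem in Appendix~\ref{appendix:JHT} \emph{without proof}; there is no argument to compare against. Your proposal is correct and self-contained, so the comparison reduces to commentary on your choices.

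Parts~1--2 are handled exactly as one would expect: $\MM_1=\II+X$, $\MM_2=\II-X$ with $X$ the orthonormal Jacobi matrix, and the symmetry~\eqref{eq:Pnsym} swaps $\alpha\leftrightarrow\beta$. Part~3 is the trivial consequence you say it is. Your part~5 is precisely the commutator argument the paper gives in the main text (Section~5.4) just before Eq.~\eqref{eq:SDBJHT}, so there you are reproducing the paper's reasoning verbatim.

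Your part~4 is the interesting contribution. The paper merely asserts that the Cholesky entries ``satisfy the pentadiagonal Cholesky recurrence'' and records closed forms, leaving verification to the reader. Your route --- recognising that the coefficients of $(1-x^2)\tilde{P}_n^{(\alpha+1,\beta+1)}$ in the $\tilde{P}_k^{(\alpha,\beta)}$ basis assemble into a lower-triangular bandwidth-two matrix $A$ with $AA^\top=\MM$ by Parseval in $L^2([-1,1],(1-x)^{\alpha+1}(1+x)^{\beta+1}\ud x)$, hence $\RR=A^\top$ by uniqueness of Cholesky --- is a genuine structural explanation of \emph{why} the Cholesky factor has closed-form entries, not merely a check that it does. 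It also makes the derivation of $e_n,f_n,g_n$ mechanical via the two contiguity relations, and the specialisation $\alpha=\beta$ to Appendix~\ref{appendix:SHT} falls out for free. One small caveat: your relative-accuracy remark in part~3 is heuristic (the ``only subtraction'' on the main diagonal of $\MM$ is benign asymptotically, but you have not bounded it for small $n$); this is no worse than what the paper claims, which is itself unproved.
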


\end{document}